\newtheorem{theorem}{Theorem}[section]{\bf}{\it}
\newtheorem{lemma}[theorem]{Lemma}{\bf}{\it}
\newtheorem{proposition}[theorem]{Proposition}{\bf}{\it}
\newtheorem{corollary}[theorem]{Corollary}{\bf}{\it}
{\bf}{\it} % Main Theorems, numbered A,B,...
{\bf}{\it}
\newtheorem*{theorem*}{Theorem}
\newtheorem{remark}[theorem]{Remark}%[theorem]{Remark}{\bf}{\it}
{\bf}{\it}
{\bf}{\it}
\newtheorem*{namedtheorem}{\theoremname}
\newcommand{\theoremname}{testing}
\newenvironment{named}[1]{\renewcommand{\theoremname}{#1}\begin{namedtheorem}}{\end{namedtheorem}}
\newtheorem*{definition*}{Definition}
\newtheorem{example}[theorem]{Example}
\newtheorem*{example*}{Example}
\theoremstyle{remark}
\theoremstyle{definition}
\theoremstyle{remark}
\newcommand{\R}{\mathbb R}
\newcommand{\Z}{\mathbb Z}
\newcommand{\C}{\mathbb C}
\newcommand{\N}{\mathbb N}
\newcommand{\loc}{{\operatorname{loc}}}
\newdimen\vintkern\vintkern11pt
\def\vint{-\kern-\vintkern\int}
\newcommand{\norm}[1]{\lVert #1 \rVert}
\newcommand{\grad}{\nabla}
\newcommand{\bH}{\mathbb{H}}
\newcommand{\vol}{\mathrm{vol}}
\newcommand{\Ratio}{\mathsf{R}}
\newcommand{\QR}{\mathsf{QR}}
\title[]{Quasiregular curves}
\author{Pekka Pankka}
\address{Department of Mathematics and Statistics, P.O. Box 68 (Pietari Kalmin katu 5), FI-00014 University of Helsinki, Finland}
\email{pekka.pankka@helsinki.fi}
\date{\today}
\thanks{This work was supported in part by the Academy of Finland project \#297258.}
\date{\today}
\subjclass[2010]{Primary 30C65; Secondary 32A30, 53C15, 53C57}
\begin{document}

\begin{abstract}
We extend the notion of a pseudoholomorphic vector of Iwaniec, Verchota, and Vogel to mappings between Riemannian manifolds. Since this class of mappings contains both quasiregular mappings and (pseudo)holomorphic curves, we call them quasiregular curves.
 
Let $n\le m$ and let $M$ be an oriented Riemannian $n$-manifold, $N$ a Riemannian $m$-manifold, and $\omega \in \Omega^n(N)$ a smooth closed non-vanishing $n$-form on $N$. 
A continuous Sobolev map $f\colon M \to N$ in $W^{1,n}_\loc(M,N)$ is a \emph{$K$-quasiregular $\omega$-curve for $K\ge 1$} if $f$ satisfies the distortion inequality $(\norm{\omega}\circ f)\norm{Df}^n \le K (\star f^* \omega)$ almost everywhere in $M$.

We prove that quasiregular curves satisfy Gromov's quasiminimality condition and a version of Liouville's theorem stating that bounded quasiregular curves $\R^n \to \R^m$ are constant. We also prove a limit theorem that a locally uniform limit $f\colon M \to N$ of $K$-quasiregular $\omega$-curves $(f_j \colon M\to N)$ is also a $K$-quasiregular $\omega$-curve.

We also show that a non-constant quasiregular $\omega$-curve $f\colon M \to N$ is discrete and satisfies $\star f^*\omega >0$ almost everywhere, if one of the following additional conditions hold: the form $\omega$ is simple or the map $f$ is $C^1$-smooth. 
\end{abstract}

\maketitle

%\setcounter{tocdepth}{1}
%\tableofcontents

\section{Introduction}
Quasiconformal homeomorphisms admit three classical definitions: \emph{analytic definition}, based on weak differential, \emph{geometric definition}, based on modulus of curve families, and \emph{metric definition} based on infinitesimal metric distortion. Out of these three ways to define quasiconformality, the metric definition is the only one which does not require the spaces to have the same dimension and, in particular, allows us to consider quasiconformal embeddings into higher dimensional spaces. The geometric definition, which is based on comparison of moduli of curve families and their images, is ineffective in this case, since curve families contained in a lower dimensional subspace typically have zero modulus. The analytic definition, which extends to the definition of quasiregular mappings, is based on the Jacobian determinant of the mapping and hence is \emph{a priori} not at our disposal. 

The higher dimensional quasiconformal theory has an extensive literature. We refer to e.g.~monographs of V\"ais\"al\"a \cite{Vaisala-book} or Gehring, Martin, and Palka \cite{Gehring-Martin-Palka-book} or articles of Heinonen and Koskela \cite{Heinonen-Koskela-Invent, Heinonen-Koskela-Acta} and V\"ais\"al\"a \cite{Vaisala-TAMS-1981} for discussion on quasiconformal and related quasisymmetric theory.

In this article, we discuss an extension of the analytic definition for quasiregular mappings, called quasiregular curves, similar to pseudoholomorphic vectors of Iwaniec, Verchota, and Vogel \cite{Iwaniec-Verchota-Vogel}. The name stems from the observation that holomorphic and pseudoholomorphic curves are quasiregular curves. 
 
Recall that a continuous mapping $f\colon M\to N$ between oriented Riemannian $n$-manifolds is \emph{$K$-quasiregular} for $K\ge 1$ if $f$ belongs to the Sobolev space $W^{1,n}_\loc(M,N)$ and satisfies the distortion inequality 
\[
\norm{Df}^n \le K J_f 
\]
almost everywhere in $M$, where $\norm{Df}$ is the operator norm of the differential $Df$ of $f$ and $J_f$ the Jacobian determinant of $f$ defined by $f^*\vol_N = J_f \vol_M$. For homeomorphisms this is the analytic definition of quasiconformality and therefore a quasiregular homeomorphism is called  \emph{quasiconformal}. We refer to monographs of Reshetnyak \cite{Reshetnyak-book}, Rickman \cite{Rickman-book}, and Iwaniec--Martin \cite{Iwaniec-Martin-book} for the theory of quasiregular mappings.

For the definition of a quasiregular curve, we define first the auxiliary notion of an $n$-volume form on an $m$-manifold for $m \ge n$. Let $M$ and $N$ be an oriented Riemannian $n$-manifold and an Riemannian $m$-manifold, respectively, for $n \le m$. We say that a smooth differential $n$-form $\omega \in \Omega^n(N)$ is an \emph{$n$-volume form} if $\omega$ is non-vanishing and closed. Note that, since $\omega \wedge \star \omega$ is a non-vanishing $m$-form, the manifold $N$ is orientable. Here, and in what follows, $\Omega^n(N)$ is the space of smooth differential $n$-forms on a smooth manifold $N$.

In the following definition, the spaces $M$ and $N$ are an oriented Riemannian $n$-manifold and a Riemannian $m$-manifold, respectively, for $n\le m$, and $\omega \in \Omega^n(N)$ is an $n$-volume form.

\begin{definition*}
A continuous map $f\colon M \to N$ is a \emph{$K$-quasiregular $\omega$-curve for $K\ge 1$} if $f$ belongs to the Sobolev space $W^{1,n}_\loc(M, N)$ and 
\begin{equation}
\label{eq:QRC}
\tag{QRC}
(\norm{\omega}\circ f) \norm{Df}^n \le K \left( \star f^*\omega \right)
\end{equation}
almost everywhere on $M$.
\end{definition*}
Here $\star f^*\omega$ is the Hodge star dual of the $n$-form $f^*\omega$, that is, the function satisfying $(\star f^*\omega)\vol_M = f^*\omega$. The function $\norm{\omega}\colon N \to [0,\infty)$ is the \emph{pointwise comass norm of $\omega$} given by
\[
\norm{\omega}(p) = \max\{ |\omega_p(v_1,\ldots, v_k)| \colon v_1,\ldots,v_k \in T_p N,\ |v_i| \le 1\}
\]
for each $p\in N$; see Federer \cite[Section 1.8.1]{Federer-book}.

\begin{remark}
In \cite{Iwaniec-Verchota-Vogel}, Iwaniec, Verchota, and Vogel define that a map $f=(f_1,\ldots, f_n) \colon \Omega \to \C^n$ , is a \emph{pseudoholomorphic vector on a domain $\Omega \subset \C$} if $f$ belongs to the Sobolev space $W^{1,2}_\loc(\Omega, \C^n)$ and satisfies the distortion inequality $|Df|^2 \le 2K \left( J_{f_1}+\cdots J_{f_n}\right)$ almost everywhere for $K\ge 1$, where $|Df|$ is the Hilbert--Schmidt norm of $Df$. Since $J_{f_1}+\cdots + J_{f_n} = f^*\omega$ for the standard symplectic form $\omega = dx_1\wedge dy_1 + \cdots + dx_n \wedge dy_n$ and norms $\norm{Df}$ and $|Df|$ are equivalent, we have that pseudoholomorphic vectors are quasiregular curves. We refer to \cite[Section 7]{Iwaniec-Verchota-Vogel} for more details.
\end{remark}

Extending the introduced terminology, we also say that $f\colon M\to N$ is a \emph{quasiregular $\omega$-curve} if $f$ is a $K$-quasiregular $\omega$-curve for some $K\ge 1$, and that $f\colon M\to N$ is a \emph{quasiregular curve} if $f$ is a quasiregular $\omega$-curve for some $n$-volume form $\omega\in \Omega^n(N)$ on $N$. In these cases, we tacitly assume without further notice that the manifold $M$ is an oriented Riemannian $n$-manifold and $N$ is a Riemannian $m$-manifold for $n \le m$.

\begin{example}
For oriented Riemannian manifolds $M$ and $N$ of same dimension and for $\omega = \vol_N$, we recover the definition of a $K$-quasiregular map $M\to N$. Thus quasiregular maps are quasiregular curves.  In the same vein, if $\pi \colon P \to N$ is a Riemannian bundle over $N$ and $F\colon M \to N$ is a $K$-quasiregular $\omega$-curve for $\omega =\pi^*\vol_N$, then the composition $f=\pi \circ F \colon M \to N$ is a $K$-quasiregular mapping. Indeed, since $\pi$ is a Riemannian isometry, the map $f$ is in $W^{1,n}_\loc(M,N)$ and we have the estimate
\begin{align*}
\norm{Df}^n 
&= (\norm{\pi^*\vol_N}\circ \pi \circ F)\norm{D(\pi \circ F)}^n 
\le (\norm{\omega} \circ F) \norm{DF}^n \\
&\le K (\star F^*\omega) 
= K (\star F^*\pi^*\vol_N) = K (\star f^*\vol_N) 
= K J_f. 
\end{align*}
\end{example}

\begin{example}
For $j=1,2$, let $N_j$ be a Riemannian $n$-manifold, let $\omega_j \in \Omega^n(N_j)$ be an $n$-volume form, $f_j \colon M \to N_j$ a $K$-quasiregular map, and $\pi_j \colon N_1\times N_2 \to N_j$ a projection. Let $\omega = \pi_1^*\omega_1 + \pi_2^*\omega_2 \in \Omega^n(N_1\times N_2)$. Then $f=(f_1,f_2) \colon M\to N_1\times N_2$ is a $K$-quasiregular $\omega$-curve. Indeed, since $\norm{Df} \le \norm{Df_1} + \norm{Df_2}$ and $\star f^*\omega = \star f_1^*\omega_1 + \star f_2^*\omega_2$ almost everywhere in $M$, and $\norm{\omega}=1$, we have that 
\begin{align*}
(\norm{\omega}\circ f) \norm{Df}^n 
&\le 2^n \left( \norm{Df_1}^n + \norm{Df_2}^n \right)
\le 2^n K (\star f^*\omega).
\end{align*}
By the same argument, holomorphic curves $f=(f_1,\ldots, f_n) \colon \Omega\to \C^n$, where $\Omega \subset \C$ is a domain, are $1$-quasiregular curves. Indeed, since $\norm{Df}^2 \le \norm{Df_1}^2 + \cdots + \norm{Df_n}^2$, we have that $f$ is a $1$-quasiregular $\omega$-curve for the symplectic form $\omega = dx_1\wedge dy_1 + \cdots + dx_n \wedge dy_n$. 
\end{example}

\begin{example}
Let $(N,\omega, J)$ be a K\"ahler manifold and suppose that the almost complex structure $J$ is calibrated by the symplectic form $\omega$. Suppose further that $\omega$ is bounded and  $\ell(\omega) = \inf_{z\in N} \ell(\omega)_p > 0$, where $\ell(\omega)_p = \min_{|v|=1} \omega(v,iv)$ for each $p\in N$. Then a $J$-holomorphic curve $f \colon \C \to N$ is a $K$-quasiregular $\omega$-curve for $K=\norm{\omega}_\infty/\ell(\omega)$. Indeed, since $J$ is an isometry and $J \circ Df = Df \circ i$, we have, for each $z\in \C$ and each unit vector $v\in T_z\C$, that $\norm{Df}^2 = |Df(v)|^2$. Thus, for an orthonormal basis $\{e_1,e_2\}$ of $T_z\C$ at $z\in \C$, we have that
\begin{align*}
\star f^*\omega &= f^*\omega(e_1,e_2) = f^*\omega(e_1,ie_1) = \omega(Df(e_1), Df (ie_1)) \\
&= \omega(Df(e_1), J Df(e_1)) \ge \ell(\omega) |Df(e_1)|^2.
\end{align*}
For more discussion, we refer to Gromov's article \cite{Gromov-Invent} on pseudoholomorphic curves in symplectic geometry or e.g.~Audin and Lafontaine \cite{Audin-Lafontaine-book} for details.
\end{example}

\begin{remark}
Examples of $n$-volume forms on $m$-manifolds for $n \le m$ are e.g.~exterior powers of symplectic forms and coclosed contact forms. More precisely, if $N$ has even dimension $2n$ and $\omega\in \Omega^2(N)$ is a symplectic $2$-form, then $\omega^{\wedge k}$ is a $2k$-volume form on $N$. In this case, $\omega^{\wedge n}$ is a standard volume form on $N$ and quasiregular $\omega^{\wedge n}$-curves into $N$ are quasiregular mappings. 

If $N$ has odd dimension $2n+1$ and $\theta \in \Omega^1(N)$ is a contact form satisfying $d(\star \theta) = 0$, then $\omega = \star \theta$ is an $2n$-volume form. For example, the Heisenberg form $\theta_H = dt - \frac{1}{2}(x dy - ydx)$ in $\R^3$ is a coclosed contact form. Clearly, there exists an abundance of quasiregular $(\star \theta_H)$-curves $B^2 \to \R^3$. However, we do not know if there exist non-constant entire quasiregular $(\star \theta_H)$-curves $\R^2 \to \R^3$. Note that here the $2$-form $\star \theta_H$ is simple.
\end{remark}

We note in passing that, similarly as quasiconformal or quasiregular maps, the distortion of quasiregular curves is conformally invariant in the following sense: \emph{Let $f\colon M \to N$ be a $K$-quasiregular curve between Riemannian manifolds $(M,g_M)$ and $(N,g_N)$. Then $f$ is $K$-quasiregular with respect to Riemannian manifolds $(M,\tilde g_M)$ and $(N,\tilde g_N)$ for Riemannian metrics $\tilde g_m$ and $\tilde g_N$ conformally equivalent to $g_M$ and $g_N$, respectively}. Therefore, for example, the space 
\[
\QR_K(M,N;\omega) = \{ f\colon M \to N \colon f \text{ is a } K\text{-quasiregular } \omega\text{-curve}\}
\]
of all $K$-quasiregular $\omega$-curves between Riemannian manifolds $M$ and $N$ for a fixed $n$-volume form $\omega\in \Omega^n(N)$, is a conformal invariant of manifolds $M$ and $N$.

\bigskip

In this article, we prove three results on quasiregular curves for general $n$-volume forms and one in the special case of simple $n$-volume forms.

\subsection{Quasiminimality of quasiregular curves}

The first of the three theorems we prove on general quasiregular curves is that a quasiregular $\omega$-curve is quasiminimal in the sense of Gromov's definition \cite[Definition 6.37]{Gromov-book} if the form $\omega$ has bounded ratio
\[
\Ratio(\omega) = \frac{\sup \norm{\omega}}{\inf\norm{\omega}} < \infty.
\]

For the definition of quasiminimality, we give first an auxiliary definition of a competitor. Let $f\colon M \to N$ be a continuous map in $W^{1,n}_\loc(M,N)$ and let $W \Subset M$ be a compact $n$-submanifold with boundary. We say that a continuous map $h \colon M \to N$ is an \emph{competitor for $f$ on $W$} (or \emph{$(f,W)$-competitor} for short) if $h$ is a Sobolev map in $W^{1,n}_\loc(M,N)$, $f|_{\partial W} = h|_{\partial W}$, and $fW$ is homologous to $hW$ in $N$ modulo $f(\partial W)$. %Here we consider $W$, and hence also $fW$ and $hW$, as $n$-chains.

\begin{definition*}
A continuous $W^{1,n}_\loc(M,N)$-mapping $f\colon M \to N$ from an $n$-manifold $M$ to an $m$-manifold $N$ for $m \ge n$ is \emph{$C$-quasiminimal} if, for each compact $n$-submanifold $W \Subset M$ with boundary, each $(f,W)$-competitor $h \colon M \to N$ satisfies
\[
\int_W \norm{\wedge^n Df} \vol_M \le C \int_W \norm{\wedge^n Dh} \vol_M.
\]
\end{definition*}

Quasiregular $\omega$-curves are quasiminimal, quantitatively, if $\omega$ has bounded ratio. More precisely, we have the following result.

\begin{theorem}
\label{thm:quasiminimality-intro}
Let $\omega \in \Omega^n(N)$ be an $n$-volume form of bounded ratio. Then a $K$-quasiregular $\omega$-curve $M \to N$ is $K\Ratio(\omega)$-quasiminimal.
\end{theorem}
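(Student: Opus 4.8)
The plan is to compare the two integrals in the quasiminimality inequality through a single chain of estimates whose central equality is the homological invariance of $\int_W f^*\omega$. I would fix a compact $n$-submanifold $W\Subset M$ with boundary and an $(f,W)$-competitor $h$, and work pointwise with a positively oriented orthonormal frame $e_1,\dots,e_n$ of $TM$, so that $\star f^*\omega=\omega(Df\,e_1,\dots,Df\,e_n)$ and similarly for $h$.

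The first ingredient is two pointwise comparisons. Since $\wedge^n Df$ acts on the one-dimensional space $\wedge^n T_xM$, its norm equals the product of the singular values of $Df$, whence $\norm{\wedge^n Df}\le \norm{Df}^n$; and the comass--mass duality applied to the simple $n$-vector $Df\,e_1\wedge\dots\wedge Df\,e_n$ gives $\star f^*\omega\le (\norm{\omega}\circ f)\,\norm{\wedge^n Df}$, with the analogous bound for $h$. The distortion inequality \eqref{eq:QRC} then forces $\star f^*\omega\ge 0$ almost everywhere and yields
\[
\norm{\wedge^n Df}\le \norm{Df}^n \le \frac{K}{\norm{\omega}\circ f}\,(\star f^*\omega)\le \frac{K}{\inf\norm{\omega}}\,(\star f^*\omega),
\]
where $\inf\norm{\omega}>0$ since the ratio $\Ratio(\omega)$ is finite and $\omega$ is non-vanishing. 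Integrating over $W$ produces
\[
\int_W \norm{\wedge^n Df}\,\vol_M \le \frac{K}{\inf\norm{\omega}}\int_W (\star f^*\omega)\,\vol_M = \frac{K}{\inf\norm{\omega}}\int_W f^*\omega.
\]

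The decisive step is the equality $\int_W f^*\omega=\int_W h^*\omega$. Here I would use that $\omega$ is closed together with the competitor hypothesis that $fW$ is homologous to $hW$ modulo $f(\partial W)$. Since $f|_{\partial W}=h|_{\partial W}$, the pushforward chains satisfy $\partial(f_*[W]-h_*[W])=0$, so $f_*[W]-h_*[W]$ is an absolute cycle, and the homology condition makes it a boundary; pairing the closed form $\omega$ against a boundary gives zero, hence $\int_W f^*\omega-\int_W h^*\omega=0$. Combining with the comass bound $\star h^*\omega\le (\sup\norm{\omega})\,\norm{\wedge^n Dh}$ gives
\[
\int_W f^*\omega=\int_W (\star h^*\omega)\,\vol_M\le (\sup\norm{\omega})\int_W \norm{\wedge^n Dh}\,\vol_M.
\]
Chaining the three displays yields $\int_W \norm{\wedge^n Df}\,\vol_M\le K\,\Ratio(\omega)\int_W \norm{\wedge^n Dh}\,\vol_M$, which is exactly $K\Ratio(\omega)$-quasiminimality.

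The comass inequalities and the distortion estimate are routine linear algebra once $\norm{\wedge^n Df}$ is read off as the product of singular values, so the genuine obstacle is justifying the middle equality for Sobolev rather than smooth maps. The homological phrasing of the competitor definition is tailored to supply precisely this, but care is needed to make sense of the pushforwards $f_*[W]$, $h_*[W]$ for continuous $W^{1,n}_\loc$ maps, to check that $f^*\omega$ and $h^*\omega$ are integrable (using boundedness of $\omega$ on the compact sets $f(W)$ and $h(W)$), and to verify that pairing a closed smooth form against a relative boundary vanishes in this regularity. This is the only place where the closedness of $\omega$, and topology, enters the argument.
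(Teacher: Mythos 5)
Your proposal is correct and follows essentially the same route as the paper's proof: the pointwise linear-algebra bounds $\norm{\wedge^n Df}\le\norm{Df}^n$ and $\star h^*\omega\le(\norm{\omega}\circ h)\norm{\wedge^n Dh}$, the distortion inequality combined with $\inf\norm{\omega}$ and $\sup\norm{\omega}$ to produce the factor $K\Ratio(\omega)$, and, as the central step, the equality $\int_W f^*\omega=\int_W h^*\omega$ obtained by pairing the closed form $\omega$ against the boundary filling $fW-hW$ guaranteed by the competitor hypothesis (the paper phrases this via an $(n+1)$-chain $\Sigma$ with $\partial\Sigma=fW-hW$ and de Rham duality, exactly your argument). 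The regularity caveat you raise at the end is handled no more explicitly in the paper, which simply invokes de Rham's theorem for the duality pairing.
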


\subsection{Liouville's theorem for quasiregular curves}

Liouville's classical theorem in complex analysis states that \emph{bounded entire functions $\C\to \C$ are constant}. It was known from very early on that the same result holds also for quasiregular mappings $\R^n \to \R^n$; see e.g.~\cite[Corollary III.1.14]{Rickman-book} and the related discussion.  A version of Liouville's theorem holds also for quasiregular curves. 

\begin{theorem}
\label{thm:Liouville-intro}
Let $N$ be a complete Riemannian $m$-manifold and $\omega \in \Omega^n(N)$ an exact $n$-volume form for $n\le m$. Then each bounded quasiregular $\omega$-curve $\R^n \to N$ is constant.
\end{theorem}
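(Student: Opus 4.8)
The plan is to imitate the classical proof of Liouville's theorem for quasiregular maps, which exploits the growth of the $n$-energy together with the fact that the image lies in a bounded set. Since $\omega$ is exact, write $\omega = d\eta$ for some $\eta \in \Omega^{n-1}(N)$. The key identity is that for a quasiregular $\omega$-curve $f$, the quantity $\int_{B^n(R)} \star f^*\omega\, \vol$ equals, by Stokes' theorem, a boundary integral $\int_{\partial B^n(R)} f^*\eta$. Because $f$ maps into a bounded region of $N$, the pulled-back form $f^*\eta$ can be controlled in terms of $\norm{Df}^{n-1}$ times a constant depending on $\sup_{f(M)}\norm{\eta}$. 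This converts the interior energy bound into a boundary estimate that grows only like $R^{n-1}$ times the $L^{n-1}$-norm of $Df$ on the sphere, which by Hölder is dominated by $R^{(n-1)/n}$ times the $L^n$-norm of $Df$ on that sphere.

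\smallskip

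First I would set up the distortion inequality \eqref{eq:QRC} to get that, almost everywhere, $\norm{Df}^n \le \frac{K}{\inf_{f(M)}\norm{\omega}}\,(\star f^*\omega)$, so that the total $n$-energy $E(R)=\int_{B^n(R)}\norm{Df}^n\,\vol$ is bounded by $\frac{K}{\inf\norm{\omega}}\int_{B^n(R)}\star f^*\omega\,\vol$. Here I must note that since $f$ is bounded, $f(M)$ lies in a compact subset of $N$ (using completeness of $N$ so that closed bounded sets are compact), on which $\norm{\omega}$ is bounded below by a positive constant — $\omega$ is non-vanishing and continuous — and $\norm{\eta}$ is bounded above. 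Next, applying Stokes' theorem to $f^*\omega = d(f^*\eta)$ on $B^n(R)$ gives
\begin{equation*}
\int_{B^n(R)} \star f^*\omega\,\vol = \int_{B^n(R)} f^*\omega = \int_{\partial B^n(R)} f^*\eta.
\end{equation*}
The integrand $f^*\eta$ on the sphere $\partial B^n(R)$ is pointwise bounded by $(\norm{\eta}\circ f)\,\norm{Df}^{n-1}$, hence $\bigl|\int_{\partial B^n(R)} f^*\eta\bigr| \le C\int_{\partial B^n(R)}\norm{Df}^{n-1}\,d\haus^{n-1}$, with $C = \sup_{f(M)}\norm{\eta}$.

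\smallskip

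Then I would run the standard growth-versus-energy argument. By Hölder's inequality on $\partial B^n(R)$,
\begin{equation*}
\int_{\partial B^n(R)}\norm{Df}^{n-1}\,d\haus^{n-1} \le \left(\int_{\partial B^n(R)}\norm{Df}^n\,d\haus^{n-1}\right)^{\frac{n-1}{n}} \bigl(\haus^{n-1}(\partial B^n(R))\bigr)^{\frac{1}{n}},
\end{equation*}
so, writing $E(R)=\int_{B^n(R)}\norm{Df}^n\,\vol$ and $E'(R)=\int_{\partial B^n(R)}\norm{Df}^n\,d\haus^{n-1}$ for the derivative of $E$ (using the coarea formula), we obtain a differential inequality of the form $E(R) \le C' R^{(n-1)/n}\,E'(R)^{(n-1)/n}$. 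Combining the chain $E(R)\le \tfrac{K}{\inf\norm{\omega}}\int_{\partial B^n(R)}f^*\eta$ with these estimates yields $E(R)^{n/(n-1)} \le C'' R\, E'(R)$, a Gronwall-type inequality. Integrating this from some $R_0$ to $\infty$ forces $E(R)$ to be bounded independently of $R$ — indeed, if $E$ were eventually positive the inequality $E'/E^{n/(n-1)} \ge (C''R)^{-1}$ would integrate to a contradiction since $\int^\infty dR/R$ diverges while the left side converges. Hence $E(\infty)<\infty$, and then $E(R)\equiv 0$ on all of $\R^n$, so $Df = 0$ almost everywhere, giving that $f$ is constant.

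\smallskip

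\textbf{The main obstacle} I anticipate is the rigorous justification of Stokes' theorem at Sobolev regularity: $f$ is only $W^{1,n}_\loc$ and continuous, so $f^*\omega$ is merely an $L^1$ $n$-form and the boundary term $\int_{\partial B^n(R)} f^*\eta$ must be interpreted carefully, for almost every radius $R$, via a trace/approximation argument (mollifying $f$ or using that the restriction of a $W^{1,n}$ map to almost every sphere is $W^{1,n-1}$, which suffices since the boundary integrand involves only $\norm{Df}^{n-1}$). A secondary technical point is ensuring the coarea/differentiation step $E'(R)=\int_{\partial B^n(R)}\norm{Df}^n$ holds for a.e. $R$ and that the whole chain of inequalities can be run on this full-measure set of radii. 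Once these measure-theoretic details are in place, the geometric heart of the argument — trading exactness of $\omega$ for a boundary estimate and closing with the capacity-type growth inequality — is exactly the quasiregular-map proof adapted to the curve setting.
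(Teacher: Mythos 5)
Your proof is correct in substance, but it follows a genuinely different route from the paper. The paper first establishes a Caccioppoli inequality (Proposition \ref{prop:Caccioppoli}): testing $f^*\omega = d(f^*\tau)$ against smooth compactly supported cutoffs $\psi^n$, integrating by parts, and absorbing via H\"older gives
\[
\int_M \psi^n f^*\omega \le C K^{n-1} \int_M |\grad \psi|^n \left( \frac{\norm{\tau}^n}{\norm{\omega}^{n-1}}\right) \circ f,
\]
and then Liouville follows in three lines from the $n$-parabolicity of $\R^n$ in capacity form: since $\capa_n(\bar B^n(r),\R^n)=0$, one can choose $\psi \equiv 1$ on $B^n(r)$ with $\int |\grad\psi|^n \le \varepsilon$, forcing $\int_{B^n(r)} f^*\omega = 0$. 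You instead run the classical growth argument: Stokes on balls, H\"older on spheres, the coarea identity $E'(R)=\int_{\partial B^n(R)}\norm{Df}^n$, and the differential inequality $E(R)^{n/(n-1)} \le C R\, E'(R)$, closed by the divergence of $\int^\infty dR/R$. The two proofs exploit the same structure (exactness, H\"older with exponents $n$ and $n/(n-1)$, compactness of $\overline{f(\R^n)}$ to control $\norm{\eta}$ and $\inf\norm{\omega}$, and parabolicity of $\R^n$ --- your divergent logarithm \emph{is} the parabolicity), but the paper's test-function formulation buys two things your route does not: it completely avoids the trace/a.e.-sphere and coarea technicalities that you correctly flag as your main obstacle (all integrations by parts are against smooth compactly supported forms, which is legitimate at bare $W^{1,n}_\loc$ regularity), and the Caccioppoli inequality is reused elsewhere in the paper (it drives the Sobolev bound in Lemma \ref{lemma:limit_is_Sobolev} and yields the generalization in Remark \ref{rmk:Liouville}, where only boundedness of $\norm{\tau}^n/\norm{\omega}^{n-1}$ is needed rather than a bounded image). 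One small repair to your write-up: the sentence ``forces $E(R)$ to be bounded \dots hence $E(\infty)<\infty$, and then $E\equiv 0$'' conflates two things --- boundedness of $E$ does not imply vanishing; the correct conclusion is drawn directly from your contradiction, namely that $E$, being non-negative and non-decreasing, can never become positive, so $E \equiv 0$.
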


As for quasiregular mappings, the proof reduces to a simple application of the $n$-parabolicity of the Euclidean $n$-space and a Caccioppoli inequality (Proposition \ref{prop:Caccioppoli}) for quasiregular curves. 

\begin{remark}
Another version of Liouville's theorem 
%for closed manifolds 
states that \emph{a quasiregular $\omega$-curve $f\colon M\to N$ is constant if $M$ is a closed manifold and $\omega$ is an exact form}. Indeed, since $f^*\omega$ is a weakly exact $n$-form on $M$, we have
\[
\int_M (\norm{\omega}\circ f) \norm{Df}^n \le K \int_M f^*\omega = 0.
\]
Since $\norm{\omega}\circ f$ is a non-negative function, we obtain that $Df= 0$ almost everywhere and that $f$ is constant. In particular, quasiregular curves from closed manifolds into Euclidean spaces are constant.
\end{remark}

\subsection{Limit theorem}

Our second theorem is a limit theorem for quasiregular curves. For quasiregular mappings the statement reads as follows \cite[Theorem VI.8.6]{Rickman-book}: \emph{a locally uniform limit of $K$-quasiregular mappings is $K$-quasiregular}. For quasiregular curves, an analogous statement holds.

\begin{theorem}
\label{thm:limit-intro}
For $n\le m$, let $M$ and $N$ be an oriented Riemannian $n$-manifold and a Riemannian $m$-manifold, respectively, let $\omega \in \Omega^n(N)$ be an $n$-volume form on $N$, and let $(f_j)$ be a sequence of $K$-quasiregular $\omega$-curves $f_j \colon M \to N$ converging locally uniformly to a mapping $f\colon M \to N$. Then $f$ is a $K$-quasiregular $\omega$-curve.
\end{theorem}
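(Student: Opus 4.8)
The plan is to verify the three defining properties of a $K$-quasiregular $\omega$-curve for the limit map $f$: continuity, which is immediate since a locally uniform limit of continuous maps is continuous; membership in $W^{1,n}_\loc(M,N)$; and the distortion inequality \eqref{eq:QRC}. The whole argument is local, so I fix a compact $n$-submanifold $W \Subset M$ inside a coordinate chart and work in coordinates on $M$ and, after covering the compact set $f(\overline W)$ by finitely many charts of $N$ (or isometrically embedding a neighborhood of it into some $\R^L$), in coordinates on $N$; all quantities appearing in \eqref{eq:QRC} are computed the same way in any chart.

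First I would produce uniform local $W^{1,n}$ bounds. Since $f_j \to f$ locally uniformly, for $W' \Supset W$ the union of $\bigcup_{j\ge j_0} f_j(\overline{W'})$ and $f(\overline{W'})$ lies in a compact set $Q\subset N$ on which the continuous, non-vanishing function $\norm{\omega}$ is bounded below by some $c>0$. The distortion inequality then gives $\norm{Df_j}^n \le (K/c)(\star f_j^*\omega)$ on $W'$, and Proposition \ref{prop:Caccioppoli} (applied to the pair $W\Subset W'$) yields a bound $\int_W \norm{Df_j}^n \vol_M \le C$ uniform in $j$, with $C$ depending only on $W$, $W'$, $K$, $\omega$, and $Q$. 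Together with the uniform $L^\infty$ bound from local uniform convergence, this bounds $(f_j)$ in $W^{1,n}(W,\R^L)$. For $n\ge 2$ the space $L^n$ is reflexive, so along a subsequence $Df_j \rightharpoonup F$ weakly in $L^n(W)$; since $f_j\to f$ uniformly forces $Df_j\to Df$ in the sense of distributions, the weak limit is $F=Df$. As this limit is independent of the subsequence, the full sequence satisfies $Df_j \rightharpoonup Df$ weakly in $L^n_\loc$, and in particular $f\in W^{1,n}_\loc(M,N)$. (For $n=1$ one argues instead with equi-integrability or bounded variation to reach the same conclusion.)

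It remains to pass the distortion inequality to the limit. Fix $\varphi \in C_c^\infty(W)$ with $\varphi\ge 0$, write $a_j=\norm{\omega}\circ f_j$ and $a=\norm{\omega}\circ f$, and integrate \eqref{eq:QRC} for $f_j$ against $\varphi$:
\[
\int_W \varphi\, a_j \norm{Df_j}^n \vol_M \le K \int_W \varphi\, f_j^*\omega .
\]
On the right-hand side, $f_j^*\omega$ is, in coordinates, a combination of the $n\times n$ minors of $Df_j$ with coefficients $\omega_I\circ f_j$; the minors are weakly continuous under weak $W^{1,n}$ convergence and the coefficients converge uniformly on $W$, so $\int_W \varphi\, f_j^*\omega \to \int_W \varphi\, f^*\omega = \int_W \varphi\,(\star f^*\omega)\vol_M$. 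On the left-hand side I first replace the varying weight $a_j$ by the fixed weight $a$: since $\norm{a_j-a}_{L^\infty(W)}\to 0$ and $\int_W \norm{Df_j}^n \vol_M \le C$, the error $\int_W \varphi(a_j-a)\norm{Df_j}^n \vol_M$ tends to $0$. The integrand $\xi\mapsto \varphi(x)a(x)\norm{\xi}^n$ is non-negative and convex in $\xi$, so by the standard weak lower semicontinuity theorem for convex integrands, $\liminf_j \int_W \varphi\, a\norm{Df_j}^n \vol_M \ge \int_W \varphi\, a\norm{Df}^n \vol_M$. Combining these, $\int_W \varphi\,(\norm{\omega}\circ f)\norm{Df}^n \vol_M \le K\int_W \varphi\,(\star f^*\omega)\vol_M$ for every $\varphi\ge 0$, which yields \eqref{eq:QRC} for $f$ pointwise a.e.\ on $W$, and hence on $M$.

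The main obstacle is the weak continuity of the pullback, namely $\int_W \varphi\, f_j^*\omega \to \int_W \varphi\, f^*\omega$: this is the compensated-compactness phenomenon that the top-order Jacobian minors are null Lagrangians, now in the twisted form where each minor carries the coefficient $\omega_I\circ f_j$. Here the closedness of $\omega$ is useful: writing $\omega=d\eta$ locally and using $f_j^*\omega = d(f_j^*\eta)$ for $W^{1,n}$ maps, one integrates by parts to obtain $\int_W \varphi\, f_j^*\omega = -\int_W d\varphi\wedge f_j^*\eta$, and $f_j^*\eta$, being built from the weakly continuous $(n-1)$-minors (strictly supercritical, since $n>n-1$) times uniformly convergent coefficients, converges weakly in $L^{n/(n-1)}$, which gives the claim cleanly.
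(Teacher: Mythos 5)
Your proposal is correct, and its skeleton is the same as the paper's: the Caccioppoli inequality (Proposition \ref{prop:Caccioppoli}) gives uniform local $W^{1,n}$ bounds and weak compactness (the paper's Lemma \ref{lemma:limit_is_Sobolev}); exactness of $\omega$ plus integration by parts gives weak convergence of the pullbacks $\int \varphi\, f_j^*\omega \to \int \varphi\, f^*\omega$ (the paper's Lemma \ref{lemma:omega_convergence}); and a lower semicontinuity argument closes the distortion inequality. The genuine differences are in the last step and in the bookkeeping. You handle the varying weight $\norm{\omega}\circ f_j$ by swapping it for $\norm{\omega}\circ f$ (uniform convergence times the uniform energy bound) and then invoke the general sequential weak lower semicontinuity theorem for non-negative convex Carath\'eodory integrands applied to $\xi\mapsto \varphi(x)\,(\norm{\omega}\circ f)(x)\norm{\xi}^n$; this yields the sharp constant $K$ in one stroke and lets you argue intrinsically in charts with no loss of constant. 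The paper instead proves unweighted lower semicontinuity of the $n$-energy by hand (Lemma \ref{lemma:lower_sc}, the Iwaniec--Martin convexity/duality argument with the matrix field $\norm{Df}^{n-1}\xi\otimes\zeta$), freezes the weight by working on a small ball on which $\norm{\omega}$ oscillates by less than $\varepsilon$, and recovers the constant $K$ by letting $\varepsilon\to 0$ together with Lebesgue differentiation; the reduction from manifolds to the Euclidean local statement is then done with $(1+1/a)$-bilipschitz charts and $a\to\infty$. Likewise, your $(n-1)$-minor step is carried out in the paper by an explicit telescoping identity rather than by citing weak continuity of subcritical minors. In short: your route is shorter but leans on two standard black boxes, while the paper's is self-contained, which is why it needs the oscillation/differentiation detour to reach the sharp $K$.

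One caution: the first version of your right-hand-side argument --- pairing the $n\times n$ minors of $Df_j$, which are only bounded in $L^1$, against the varying continuous coefficients $\omega_I\circ f_j$ --- is not justified as stated: distributional convergence of top-order Jacobians does not survive pairing with merely continuous test functions, since mass can concentrate in the limit. Your second argument (write $\omega = d\eta$ locally, integrate by parts, and use that the $(n-1)$-minors are bounded in $L^{n/(n-1)}$ and converge weakly there) is the correct fix, and it is in essence what the telescoping computation in Lemma \ref{lemma:omega_convergence} accomplishes; keep that version and drop the first.
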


A short comment on the proof is in order. We may mostly follow the (classical) proof for quasiregular mappings in \cite{Rickman-book}. However, since we do not have local index theory at our disposal, we obtain the sharp distortion constant for the limit map by modifying the argument in \cite[Theorem 8.7,1]{Iwaniec-Martin-book}.

%%%%%%%%%%%%%%%%%%%%%%%%%%%%%%%%%%%%%%%%%%%%%%%%%%%%%%%%%%%%%%%%%%%%%%%%%%%%%%%
%%%%%%%%%%%%%%%%%%%%%%%%%%%%%%%%%%%%%%%%%%%%%%%%%%%%%%%%%%%%%%%%%%%%%%%%%%%%%%%
%%%%%%%%%%%%%%%%%%%%%%%%%%%%%%%%%%%%%%%%%%%%%%%%%%%%%%%%%%%%%%%%%%%%%%%%%%%%%%%

\subsection{Quasiregular curves for simple volume forms and Reshetnyak's theorem}

An $n$-form $\omega \in \Omega^n(N)$ is \emph{simple} (or \emph{decomposable}) if there exist $1$-forms $\omega_1,\ldots, \omega_n \in \Omega^1(N)$ for which $\omega = \omega_1\wedge \cdots \wedge \omega_n$. 

Quasiregular curves for simple volume forms have particularly simple structure: \emph{locally they are graphs over quasiregular mappings}. For simplicity, we state this result for quasiregular curves between in Euclidean spaces. 

\begin{theorem}
\label{thm:graph-intro}
Let $f\colon \Omega\to \R^m$ be a $K$-quasiregular $\omega$-curve, where $\Omega$ is a domain in $\R^n$, $n\le m$, $\varepsilon>0$, and $K'>K$. Then, for each $x\in \Omega$, there exists a neighborhood $D\Subset M$ of $x$, an isometry $L \colon \R^m \to \R^m$, a $K'$-quasiregular map $\hat f \colon D\to \R^n$, and a continuous Sobolev map $h \colon D\to \R^{m-n}$ in $W^{1,n}(D,\R^{m-n})$ for which $F = L \circ f|_D = (\hat f, h) \colon D \to \R^n \times \R^{m-n}$ and  
\[
(\star f^*\omega)/((1+\varepsilon)K') \le \norm{\omega_{f(x)}} J_{\hat f} \le (1+\varepsilon)K (\star f^*\omega)
\]
almost everywhere in $D$.
\end{theorem}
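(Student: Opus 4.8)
The plan is to exploit the simplicity of $\omega$ to align, at the single point $f(x)$, the covector $\omega_{f(x)}$ with a coordinate $n$-plane by an ambient isometry, and then to propagate the resulting comparison to a small neighborhood by continuity. First I would use that $\omega_{f(x)} \in \Lambda^n(\R^m)^*$ is a non-zero simple $n$-covector to write $\omega_{f(x)}/\norm{\omega_{f(x)}} = \alpha_1 \wedge \cdots \wedge \alpha_n$ with $\alpha_1, \ldots, \alpha_n$ orthonormal, extend these to an orthonormal coframe of $\R^m$, and choose an isometry $L$ of $\R^m$ with $L^*\eta = \omega_{f(x)}/\norm{\omega_{f(x)}}$, where $\eta = dx_1 \wedge \cdots \wedge dx_n$ is the constant comass-$1$ form satisfying $\pr_1^*\vol_{\R^n} = \eta$. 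Writing $F = L\circ f$, $\hat f = \pr_1 \circ F$, and $h = \pr_2 \circ F$, with $\pr_1,\pr_2$ the orthogonal projections onto the first $n$ and last $m-n$ coordinates, gives the claimed splitting $F = (\hat f, h)$; since $L$ is an isometry and the projections are $1$-Lipschitz, both $\hat f$ and $h$ inherit continuity and satisfy $\norm{D\hat f}, \norm{Dh} \le \norm{Df}$, so $h \in W^{1,n}(D,\R^{m-n})$ on any $D\Subset\Omega$. The relevant Jacobian is $J_{\hat f} = \star f^*(L^*\eta)$, since $\hat f^*\vol_{\R^n} = f^*L^*\pr_1^*\vol_{\R^n} = f^*(L^*\eta)$.

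The heart of the argument is a pointwise comparison of $\star f^*\omega$ with $\norm{\omega_{f(x)}}J_{\hat f}$. Writing $\tau_z = Df_z(e_1)\wedge\cdots\wedge Df_z(e_n)$ for a positively oriented orthonormal frame $(e_i)$ at $z$, so that $\tau_z$ is simple with mass at most $\norm{Df_z}^n$, I would note that $\star f^*\omega(z) = \langle \omega_{f(z)}, \tau_z\rangle$, while, because $L^*\eta$ is the \emph{constant} covector $\omega_{f(x)}/\norm{\omega_{f(x)}}$, one has $\norm{\omega_{f(x)}}J_{\hat f}(z) = \langle \omega_{f(x)}, \tau_z\rangle$. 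Their difference is therefore $\langle \omega_{f(z)} - \omega_{f(x)}, \tau_z\rangle$, which the comass–mass pairing bounds by $\norm{\omega_{f(z)} - \omega_{f(x)}}\,\norm{Df_z}^n$, the norm on the covector being the comass norm.

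Next I would convert this additive error into a multiplicative one. By continuity of $f$ and of $\omega$, on a sufficiently small $D\Subset\Omega$ the comass $\norm{\omega_{f(z)} - \omega_{f(x)}}$ is as small as desired and, since $\omega$ is non-vanishing, $\norm{\omega}\circ f$ stays bounded away from $0$. Feeding the distortion inequality \eqref{eq:QRC}, in the form $\norm{Df_z}^n \le K(\star f^*\omega(z))/(\norm{\omega}\circ f)(z)$, into the bound above replaces $\norm{Df_z}^n$ by a small multiple of $\star f^*\omega$, so that $|\star f^*\omega - \norm{\omega_{f(x)}}J_{\hat f}| \le c_\delta\,\star f^*\omega$ almost everywhere on $D$ with $c_\delta\to0$ as $D$ shrinks. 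This is precisely a two-sided bound of the required shape, and shrinking $D$ makes the constants fit inside $(1+\varepsilon)K$ and $(1+\varepsilon)K'$. The same \eqref{eq:QRC} bound, combined with $\norm{D\hat f}\le\norm{Df}$ and the comparison $\star f^*\omega\le(1-c_\delta)^{-1}\norm{\omega_{f(x)}}J_{\hat f}$, gives $\norm{D\hat f}^n$ bounded by a constant times $J_{\hat f}$ with constant tending to $K<K'$, so $\hat f$ is $K'$-quasiregular for $D$ small; note also that \eqref{eq:QRC} forces $\star f^*\omega\ge0$, hence $J_{\hat f}\ge0$ and $\hat f$ is sense-preserving.

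I expect the main obstacle to be this uniform-continuity step: the algebraic alignment is exact only at $f(x)$, and one must control the resulting error \emph{relative to} $\star f^*\omega$ uniformly over a whole neighborhood. It is precisely simplicity of $\omega$ (a single isometry aligns $\omega_{f(x)}$ with a coordinate plane) and non-vanishing of $\omega$ (bounding $\norm{\omega}\circ f$ below, so that \eqref{eq:QRC} upgrades the error to one proportional to $\star f^*\omega$) that make the argument go through; for a non-simple form no ambient isometry aligns $\omega_{f(x)}$ with a coordinate $n$-plane, and the graph description breaks down.
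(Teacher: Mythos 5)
Your proposal is correct and follows essentially the same route as the paper's proof: an ambient isometry aligning the simple covector $\omega_{f(x)}$ with $dx_1\wedge\cdots\wedge dx_n$ (the paper's Lemma \ref{lemma:rotation}), a freezing-of-coefficients step bounding $\star f^*(\omega-\omega_{f(x)})$ by $(\norm{\omega-\omega_{f(x)}}\circ f)\norm{Df}^n$ and absorbing this error via the distortion inequality \eqref{eq:QRC} (the paper's Lemma \ref{lemma:constant}), and finally projection onto the first $n$ coordinates to extract $\hat f$ and $h$. The only difference is presentational: the paper packages the absorption argument as a standalone lemma asserting that $f$ is locally a $K'$-quasiregular curve for the constant-coefficient form $\omega_0 = \omega_{f(x)}$, whereas you phrase the same estimate as a two-sided multiplicative comparison $|\star f^*\omega - \norm{\omega_{f(x)}}J_{\hat f}| \le c_\delta\, \star f^*\omega$; both yield the stated Jacobian bounds and the $K'$-quasiregularity of $\hat f$ on a sufficiently small neighborhood.
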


Having this local description at our disposal, we obtain a version of Reshetnyak's theorem in the case of a simple $n$-volume form. Recall that Reshetnyak's theorem for quasiregular mappings states that \emph{a non-constant quasiregular mappings is discrete and open}. A mapping $f\colon M\to N$ is \emph{discrete} if, for each $y\in N$, the fiber $f^{-1}(y)$ is a discrete set in $M$, and \emph{open} if the image $fU$ of an open set $U\subset M$ is open in $N$.

\begin{remark}
Before discussing the positive result, we emphasize that Reshetnyak's theorem fails for quasiregular curves in general. Indeed, in \cite{Iwaniec-Verchota-Vogel} Iwaniec, Verchota, and Vogel construct a Lipschitz regular pseudoholomorpic vector $F=(f_1,f_2)\colon \C \to \C^2$, which is constant on the lower half-plane but satisfies $J_{f_1} + J_{f_2} \equiv 1$ almost everywhere on the upper half-plane; see \cite[Lemma 5]{Iwaniec-Verchota-Vogel}. As a quasiregular curve, the map $F$ constructed in \cite{Iwaniec-Verchota-Vogel} has distortion $K> 2$. Iwaniec, Verchota, and Vogel show that such pseudoholomorphic vectors $\Omega \to \C^n$, where $\Omega \subset \C$ is a domain, do not exist if the distortion $K$ -- in the sense of quasiregular curves -- is close to $1$. We refer to \cite[p.150]{Iwaniec-Verchota-Vogel} for a detailed discussion.
\end{remark}

Regarding the openness in Reshetnyak's theorem, we note that it is immediate from the definition that, due to increase of dimension, quasiregular curves are not open mappings. Simple examples also show that quasiregular curves are not even interior mappings. Recall that a mapping $f\colon M\to N$ is \emph{interior} if the image $f\Omega$ of an open set $\Omega\subset M$ is open in the induced topology of the image $fM \subset N$. 

\begin{example*}
Let $p\in \Z_+$ and let $h \colon \C \to \R$ be a smooth function satisfying $|h(z)| \le |z|^p$ and $|h'(z)| \le p |z|^{p-1}$ for all $z\in \C$. Then the map $f\colon \C \to \R^3$, $z\mapsto (z^p,h(z))$, where $\R^3 = \C\times \R$, is a quasiregular $\omega$-curve for $\omega = dx\wedge dy$. However, for a generic choice of $h$, the curve $f$ is not interior.
\end{example*}

After these disclaimers, we are now ready to state a positive result. For the statement, we say that a map $f\colon M \to N$ is \emph{locally quasi-interior at $x\in M$} if $x$ has a neighborhood $D\Subset M$ for which $f(x)$ is in the interior of $fU$, with respect to $fD$, for each neighborhood $U \subset D$ of $x$. 

\begin{corollary}
\label{cor:discreteness-intro}
Let $f\colon M\to N$ be a non-constant quasiregular $\omega$-curve, where $\omega$ is a simple $n$-volume form. Then $f$ is discrete and locally quasi-interior at each point. 
\end{corollary}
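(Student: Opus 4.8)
The plan is to reduce the global statement for a simple $n$-volume form $\omega$ to the corresponding local statement for quasiregular mappings $D \to \R^n$, using the graph description provided by Theorem \ref{thm:graph-intro}. The key observation is that Theorem \ref{thm:graph-intro} converts the problem into a question about an honest quasiregular map $\hat f$, for which the classical Reshetnyak theorem (discreteness and openness of non-constant quasiregular mappings) is available. So the strategy has two layers: first establish the conclusion locally near each point $x \in M$ by passing through a chart and applying the graph theorem, and second promote the two local properties (discreteness of fibers, local quasi-interiority) to the full manifold.

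\emph{Setting up the local model.}
Since $\omega$ is simple, near any $x \in M$ choose a coordinate chart on $N$ and apply Theorem \ref{thm:graph-intro} with, say, $\varepsilon = 1$ and some fixed $K' > K$. This yields a neighborhood $D \Subset M$, an isometry $L$, a $K'$-quasiregular map $\hat f \colon D \to \R^n$, and a Sobolev map $h \colon D \to \R^{m-n}$ with $F = L\circ f|_D = (\hat f, h)$ and the two-sided comparison
\begin{equation}
\label{eq:disc-sandwich}
\frac{\star f^*\omega}{(1+\varepsilon)K'} \;\le\; \norm{\omega_{f(x)}} J_{\hat f} \;\le\; (1+\varepsilon) K\,(\star f^*\omega)
\end{equation}
almost everywhere in $D$. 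The first inequality in \eqref{eq:disc-sandwich}, together with $\norm{\omega_{f(x)}}>0$ (as $\omega$ is non-vanishing) and $\star f^*\omega \ge 0$ (which follows from (QRC) since $\norm{\omega}\circ f \ge 0$), shows that $J_{\hat f} \ge 0$ a.e. The crucial step is to argue that $\hat f$ is non-constant on $D$: if $\hat f$ were constant, then $J_{\hat f} = 0$ a.e., so by the second inequality $\star f^*\omega = 0$ a.e. on $D$, whence (QRC) forces $\norm{Df} = 0$ a.e. on $D$, i.e. $f$ is constant on $D$. A connectedness argument then shows $f$ is constant on all of $M$, contradicting the hypothesis. (This is where I expect the main care to be needed: one must ensure that the set where $f$ is locally constant is both open and closed.)

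\emph{Transferring discreteness and quasi-interiority.}
With $\hat f$ non-constant and $K'$-quasiregular, Reshetnyak's theorem gives that $\hat f$ is discrete and open on $D$ (after possibly shrinking $D$ to a connected neighborhood). Discreteness of $f$ at $x$ is immediate: since $F = (\hat f, h)$ and $L$ is an isometry, the fiber $f^{-1}(f(x)) \cap D$ is contained in $\hat f^{-1}(\hat f(x))$, which is discrete; hence $x$ is isolated in its fiber. For local quasi-interiority, let $U \subset D$ be any neighborhood of $x$. Openness of $\hat f$ gives that $\hat f(x)$ lies in the interior of $\hat f U \subset \R^n$. Because $F$ is a graph over $\hat f$ (the first $\R^n$-coordinate of $F$ is exactly $\hat f$), the projection $\pr \colon \R^n\times\R^{m-n} \to \R^n$ restricted to $FD$ is a continuous bijection onto $\hat f D$ with continuous inverse $y \mapsto (y, h(\hat f^{-1}(y)))$ — more carefully, the map $FD \to \hat f D$ given by projection is a homeomorphism since $h$ factors through $\hat f$ only up to the graph relation, so one argues directly that a relatively open neighborhood of $\hat f(x)$ in $\hat f D$ pulls back to a relatively open neighborhood of $F(x)$ in $FD$. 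Applying the isometry $L^{-1}$, which is a homeomorphism of $N$ carrying $FD$ onto $fD$, transfers this to the statement that $f(x)$ is in the interior of $fU$ relative to $fD$. This is precisely local quasi-interiority at $x$.

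\emph{Main obstacle.}
The principal subtlety is the homeomorphism between $FD$ and $\hat f D$ used in the quasi-interiority step: although $F = (\hat f, h)$ is a graph over $\hat f$ as a map, two distinct points of $D$ with the same $\hat f$-value could in principle have different $h$-values, so the image $FD$ need not be a genuine graph over $\hat f D \subset \R^n$, and the projection $\pr|_{FD}$ need not be injective. I would resolve this by working with the relative topology directly: a set is relatively open in $FD$ iff its preimage under $F$ is open in $D$, and since $\pr \circ F = \hat f$ is open, one checks that $\pr|_{FD}$ is an open map onto $\hat f D$, which suffices to push the interior point $\hat f(x)$ back to an interior point $F(x)$ of $FD$ without needing injectivity. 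Getting this topological bookkeeping right — rather than any hard analysis — is the part that demands the most attention.
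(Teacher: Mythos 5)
Your reduction to the local graph model, your argument that $\hat f$ is non-constant (via the two-sided Jacobian comparison and a connectedness argument --- a point the paper itself glosses over), and your discreteness step are all sound and essentially identical to the paper's proof. The genuine gap is in the quasi-interiority step, and it is exactly at the place you flagged as the ``main obstacle'': your proposed resolution does not resolve it. First, the criterion ``a set is relatively open in $FD$ iff its preimage under $F$ is open in $D$'' is false; that describes the final (quotient) topology induced by $F$, which is in general strictly finer than the subspace topology inherited from $\R^m$. (Only the harmless direction --- continuity of $F$ --- is true, and that direction does suffice to show $\pr|_{FD}$ is open, since $\pr(FD\cap O)=\hat f(F^{-1}(O))$.) Second, and more importantly, openness of $\pr|_{FD}$ is simply not enough: to put $F(x)$ in the relative interior of $FU$ you need a relatively open set $W\ni F(x)$ of $FD$ with $W\subset FU$, and the natural candidate $W=FD\cap\pr^{-1}(\hat fU)$ fails, because a point $z\in D\setminus U$ with $\hat f(z)\in\hat fU$ contributes $F(z)\in W$, while $h(z)$ may differ from every $h$-value over $U$, so $F(z)\notin FU$. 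A concrete instance: take $\omega=dx_1\wedge dx_2$ on $\R^3=\C\times\R$, $F(z)=(z^2,\mathrm{Im}\,z)$, and a plane domain $D$ avoiding $0$ but containing both $x$ and $-x$ for some real $x\neq 0$. Then $\hat f(z)=z^2$ is quasiregular, $\pr|_{FD}$ is open, and $\hat f(x)$ is interior to $\hat fU$ for a small disk $U\ni x$; yet $F(-x)=F(x)$ and points $z$ near $-x$ satisfy $F(z)\to F(x)$ but $F(z)\notin FU$ (one would need $h(-z)=h(z)$), so $F(x)$ is \emph{not} interior to $FU$ in $FD$. Nothing in your argument rules this out, because the neighborhood $D$ furnished by Theorem \ref{thm:graph-intro} may contain several points of the fiber $\hat f^{-1}(\hat f(x))$.

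What is missing is monodromy control on $\hat f$, i.e.\ a shrinking of the reference neighborhood, and this is what the paper supplies via normal neighborhoods of quasiregular maps (Rickman, Lemma I.4.8): choose a normal neighborhood $G\Subset D$ of $x$ for $\hat f$, and, given a neighborhood $U\subset G$ of $x$, a smaller normal neighborhood $G'\subset U$ at $x$. Normality gives the set identity $\hat f^{-1}\hat fG'\cap G=G'$, and this identity is precisely what makes your candidate work: $FG\cap\pr^{-1}(\hat fG')=FG'$, so $FG'$ is relatively open in $FG$ and $f(x)$ lies in the interior of $fU\supset fG'$ relative to $fG$. Alternatively, once discreteness is established, a purely point-set repair is available that avoids openness of $\hat f$ altogether: choose $D$ with compact closure such that $\bar D\cap f^{-1}(f(x))=\{x\}$; if $f(x)$ were not interior to $fU$ in $fD$, there would exist $z_j\in D\setminus U$ with $f(z_j)\to f(x)$, and any subsequential limit $z\in\bar D$ satisfies $f(z)=f(x)$, forcing $z=x$ and contradicting $z_j\notin U$. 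Either repair requires an ingredient --- normal neighborhoods or this compactness argument --- that your proposal does not contain.
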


As a consequence of Theorem \ref{thm:graph-intro}, we also obtain that quasiregular curves for simple $n$-volume forms have analytic properties similar to quasiregular mappings.

\begin{corollary}
\label{cor:analytic-intro}
Let $f\colon M \to N$ be a non-constant quasiregular $\omega$ for a simple $n$-volume form $\omega$ in $N$. Then
\begin{enumerate}
\item (positivity of the Jacobian) $\star f^*\omega >0$ almost everywhere in $M$, 
\item (higher integrability) there exists $p=p(n,K)>0$ for which $f\in W^{1,p}_\loc(M,N)$, and
\item (differentiability) $f$ is differentiable almost everywhere.
\end{enumerate}
\end{corollary}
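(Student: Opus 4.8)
The plan is to derive Corollary \ref{cor:analytic-intro} as a direct consequence of the local graph description in Theorem \ref{thm:graph-intro}, reducing each of the three analytic assertions to the corresponding classical fact for quasiregular mappings. The statements are local and invariant under isometric post-composition, so I may work in charts and freely replace $f$ by $L\circ f|_D = (\hat f, h)$ on a small neighborhood $D$ of a given point, where $\hat f\colon D\to\R^n$ is $K'$-quasiregular and $h\in W^{1,n}(D,\R^{m-n})$. The key point supplied by Theorem \ref{thm:graph-intro} is the two-sided comparison
\[
(\star f^*\omega)/((1+\varepsilon)K') \le \norm{\omega_{f(x)}} J_{\hat f} \le (1+\varepsilon)K(\star f^*\omega)
\]
almost everywhere in $D$, which ties the Jacobian $J_{\hat f}$ of the quasiregular map to the quantity $\star f^*\omega$ governing $f$.

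First I would prove (1). Since $f$ is non-constant and $N$ is connected, I claim $\hat f$ cannot be constant on any of the neighborhoods $D$ covering $M$ where $\star f^*\omega$ is not identically zero; more carefully, I would first argue that $\star f^*\omega$ is not identically zero on $M$ (otherwise the distortion inequality \eqref{eq:QRC} forces $\norm{Df}=0$ almost everywhere, hence $f$ constant), and then use that the set $\{\star f^*\omega >0\}$ is, via the left inequality above, contained in $\{J_{\hat f}>0\}$ up to a null set. On any $D$ where $\hat f$ is non-constant, Reshetnyak's theorem for quasiregular mappings gives that $\hat f$ is discrete and open, and in particular $J_{\hat f}>0$ almost everywhere on $D$ (quasiregular mappings have almost everywhere positive Jacobian). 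Feeding this into the right-hand inequality $\norm{\omega_{f(x)}}J_{\hat f}\le (1+\varepsilon)K(\star f^*\omega)$ and using that $\norm{\omega_{f(x)}}>0$ because $\omega$ is non-vanishing, I conclude $\star f^*\omega >0$ almost everywhere on $D$. A covering argument then yields positivity almost everywhere on $M$; the consistency across overlapping charts follows because the conclusion $\star f^*\omega>0$ is an intrinsic statement about $f$.

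For (2) and (3) I would again localize and transfer the known regularity of $\hat f$. Higher integrability of quasiregular mappings gives an exponent $p=p(n,K')>p_0>n$ with $\hat f\in W^{1,p}_\loc(D,\R^n)$; since $K'$ may be taken as $K'=(1+\varepsilon)K$ with a fixed small $\varepsilon$, the exponent depends only on $n$ and $K$, as required. It then remains to upgrade the regularity of the whole graph map $F=(\hat f, h)$, and for this I would use the distortion inequality for $f$ together with the graph structure: the pointwise comass comparison bounds $\norm{Df}$ in terms of $J_{\hat f}$, which by higher integrability lies in $L^{p/n}_\loc$, and one checks that the second component $h$ inherits integrability of its gradient from $\norm{Df}$, so $F\in W^{1,p}_\loc$ on each $D$ and hence $f\in W^{1,p}_\loc(M,N)$. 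Statement (3), differentiability almost everywhere, follows from (2): a Sobolev map in $W^{1,p}_\loc$ with $p>n$ is differentiable almost everywhere by the classical differentiability theorem, and alternatively one may simply invoke almost-everywhere differentiability of the quasiregular map $\hat f$ and the $W^{1,n}$-regularity of $h$ to obtain differentiability of $F$, hence of $f$.

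I expect the main obstacle to be the careful passage from the local, chart-dependent graph decompositions to the global conclusions, ensuring that the exceptional null sets and the neighborhoods $D$ patch together coherently and that the distortion exponent $p$ is genuinely uniform (depending only on $n$ and $K$) across the cover. In particular, one must verify that choosing a fixed $\varepsilon$, say $\varepsilon=1$, so that $K'=2K$ is harmless and that the higher integrability exponent $p(n,K')$ obtained locally does not degenerate; the two-sided estimate in Theorem \ref{thm:graph-intro} is precisely what prevents such degeneration, since it pins $\star f^*\omega$ between constant multiples of the well-behaved quantity $J_{\hat f}$. The remaining arguments are then routine invocations of the standard quasiregular theory (Reshetnyak's theorem, the higher integrability estimate, and almost-everywhere differentiability) combined with the graph structure.
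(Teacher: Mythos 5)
Your proposal is correct and takes essentially the same route as the paper: localize via Theorem \ref{thm:graph-intro} and then transfer the classical quasiregular facts, namely a.e.\ positivity of $J_{\hat f}$ combined with the right-hand Jacobian comparison for claim (1), Bojarski--Iwaniec higher integrability of $\hat f$ together with the pointwise bound $\norm{Dh}\le \norm{DF}\le C\norm{D\hat f}$ extracted from the distortion inequality for claim (2), and the Cesari--Calder\'on differentiability theorem for $W^{1,p}_\loc$ with $p>n$ for claim (3). One caveat: your proposed \emph{alternative} ending for (3) --- invoking a.e.\ differentiability of $\hat f$ together with mere $W^{1,n}$-regularity of $h$ --- does not work, since continuous $W^{1,n}$ maps need not be differentiable almost everywhere; the primary route through the exponent $p>n$, which is also the paper's, is the correct one.
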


\begin{remark}
Since $n$-volume forms of codimension $1$ are simple, we have that these results hold in particular for all codimension $1$ quasiregular curves $M \to N$, that is, when $\dim N = 1+ \dim M$. In particular, quasiregular curves $\R^2 \to \R^3$ have the properties in Corollaries \ref{cor:discreteness-intro} and \ref{cor:analytic-intro}. This is contrast to mappings associated to more general null Lagrangians; see Iwaniec, Verchota, and Vogel \cite[Lemma 6]{Iwaniec-Verchota-Vogel}.
\end{remark}

\subsection*{$C^1$-smooth quasiregular curves}

We end this introduction with a discussion on Reshetnyak's theorem for $C^1$-smooth quasiregular curves. It is an elementary observation that a $C^1$-smooth quasiregular curve $f \colon M \to N$ is locally a quasiregular curve with respect to a simple $n$-volume form. Indeed, since the question is local it suffices to consider a $K$-quasiregular curve $f\colon \Omega \to \R^m$ defined on a domain $\Omega \subset \R^n$. Let $x\in \Omega$. Then, by continuity of $Df$ and $\omega$, we may fix a neighborhood $U$ of $x$ and a multi-index $J=(j_1,\ldots, j_n)$ for which we have the estimate 
\[
\star f^*\omega = \sum_{I} (u_I\circ f) (\star f^*(dx_I)) \le 2{m \choose n} (u_J\circ f) (\star f^*(dx_J))
\]
in $U$, where we denote $dx_I = dx_{i_1}\wedge \cdots \wedge dx_{i_n}$ for each multi-index $I=(i_1,\ldots, i_n)$. Since $\norm{u_J dx_J} \le \norm{\omega}$, we conclude that $f|_U \colon U \to N$ is a $2{m \choose n} K$-quasiregular $(u_J dx_J)$-curve.  

Theorem \ref{thm:graph-intro} now yields that, locally, $C^1$-smooth quasiregular curves are graphs over quasiregular maps and, in particular, discrete maps by Corollary \ref{cor:discreteness-intro}. We summarize this observation as follows.
\begin{corollary}
\label{cor:smooth-Reshetnyak}
A non-constant $C^1$-smooth quasiregular $\omega$-curve $f\colon M \to N$ is a discrete map satisfying $\star f^*\omega>0$ almost everywhere in $M$.
\end{corollary}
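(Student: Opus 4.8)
The plan is to reduce Corollary \ref{cor:smooth-Reshetnyak} to the already-established theory for simple $n$-volume forms, exploiting the observation made immediately above the statement in the excerpt. The essential content is to verify that the discreteness and positivity properties, which are local in nature, transfer from the local reductions to simple forms back to the original $C^1$-smooth curve. Since both conclusions---discreteness of fibers and $\star f^*\omega > 0$ almost everywhere---are local properties, it suffices to establish them in a neighborhood of each point $x\in M$.

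First I would pass to local coordinates. Because the statement is local and the class of quasiregular curves is a conformal invariant, fix $x\in M$ and choose an oriented chart identifying a neighborhood of $x$ with a domain $\Omega\subset\R^n$ and a chart on $N$ identifying a neighborhood of $f(x)$ with a domain in $\R^m$, so that we may assume $f\colon\Omega\to\R^m$ is a $K$-quasiregular $\omega$-curve. The excerpt's computation then produces, on a possibly smaller neighborhood $U\ni x$, a multi-index $J$ and a constant $C = 2\binom{m}{n}$ for which $f|_U$ is a $CK$-quasiregular $(u_J\,dx_J)$-curve; here $u_J\,dx_J$ is a \emph{simple} $n$-volume form on $U$ (its non-vanishing is guaranteed by the continuity of $Df$ and $\omega$ together with the assumption that $f$ is non-constant near $x$, so that $\star f^*\omega$ is not identically zero and the dominating term $u_J(\star f^*(dx_J))$ is genuinely present). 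The delicate point to check is that $f|_U$ is itself non-constant, which follows because if $f$ were constant on a neighborhood of $x$ then, by the $C^1$-smoothness and the unique continuation furnished by the graph structure of simple quasiregular curves, $f$ would be constant on the connected manifold $M$, contradicting the hypothesis.

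Next I would apply Corollary \ref{cor:discreteness-intro} and Corollary \ref{cor:analytic-intro}(1) to the non-constant $CK$-quasiregular $(u_J\,dx_J)$-curve $f|_U$. Corollary \ref{cor:discreteness-intro} gives that $f|_U$ is discrete, so each fiber $(f|_U)^{-1}(y)$ is discrete in $U$; since $x$ was arbitrary and discreteness is local, $f$ is discrete on $M$. Corollary \ref{cor:analytic-intro}(1) gives $\star f^*(u_J\,dx_J) = u_J\,(\star f^*(dx_J)) > 0$ almost everywhere in $U$. To convert this into positivity of $\star f^*\omega$, I would observe that the domination inequality from the excerpt reads $\star f^*\omega \le C\,(u_J\circ f)(\star f^*(dx_J))$, and that by the quasiregular-curve distortion inequality \eqref{eq:QRC} together with $(\norm{\omega}\circ f)\norm{Df}^n\ge 0$ we always have $\star f^*\omega\ge 0$ almost everywhere. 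Positivity almost everywhere of the dominating quantity $(u_J\circ f)(\star f^*(dx_J))$ then forces $\norm{Df}>0$ almost everywhere, whence by \eqref{eq:QRC} $\star f^*\omega>0$ almost everywhere as well; here one uses that at points of differentiability where $\star f^*\omega = 0$, the distortion inequality forces $Df = 0$, which is incompatible with $u_J(\star f^*(dx_J))>0$.

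The main obstacle I anticipate is not the homological or analytic machinery---that is entirely supplied by Theorem \ref{thm:graph-intro} and its corollaries---but rather the bookkeeping around the choice of the multi-index $J$ and the patching of local conclusions. Specifically, the dominating multi-index $J$ may vary from point to point, so one must ensure that the local simple form $u_J\,dx_J$ is non-vanishing on a fixed neighborhood and that the non-constancy of $f$ genuinely descends to each local piece; this is where the $C^1$-hypothesis is used in an essential way, since it guarantees both the continuity needed to freeze $J$ on a neighborhood and the unique continuation needed to propagate non-constancy. Once these local reductions are seen to be uniform on small neighborhoods, the global statements follow immediately by covering $M$ and using that both discreteness and almost-everywhere positivity are local properties invariant under the conformal changes induced by the coordinate charts.
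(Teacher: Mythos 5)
Your proposal is correct and follows essentially the same route as the paper's proof: localize, invoke the introductory observation that a $C^1$-smooth quasiregular curve is locally a quasiregular curve for the simple form $u_J\,dx_J$ (with $J$ frozen by continuity of $Df$ and $\omega$), then apply Corollary \ref{cor:discreteness-intro} for discreteness and Corollary \ref{cor:analytic-intro}(1) together with the distortion inequality \eqref{eq:QRC} to transfer positivity back to $\star f^*\omega$ (the paper phrases this last step as the chain $K(\star f^*\omega) \ge (\norm{\omega}\circ f)\norm{Df}^n \ge (\norm{\omega_D}\circ f)\norm{Df}^n \ge \star f^*\omega_D > 0$, which is the same estimate you give). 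Your additional care in propagating non-constancy of $f$ to the local restrictions is a refinement of a point the paper leaves implicit, not a change of method.
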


\bigskip

This article is organized as follows. In Sections \ref{sec:quasiminimality}, \ref{sec:Liouville}, and \ref{sec:limit}, we prove Theorems \ref{thm:quasiminimality-intro}, \ref{thm:Liouville-intro}, and \ref{thm:limit-intro}, respectively. Finally, in Section \ref{sec:simple-forms}, we prove Theorem \ref{thm:graph-intro} and its corollaries.

\subsection*{Acknowledgements}
We thank Daniel Faraco for an important comment at the right time and pointing us to  article \cite{Iwaniec-Verchota-Vogel}. We also thank Kari Astala, Mario Bonk, David Drasin, Jani Onninen, Jang-Mei Wu, and Xiao Zhong for comments and discussions on these topics.

%%%%%%%%%%%%%%%%%%%%%%%%%%%%%%%%%%%%%%%%%%%%%%%%%%%%%%%%%%%%%%%%%%%%%%%%%%%%%%%
%%%%%%%%%%%%%%%%%%%%%%%%%%%%%%%%%%%%%%%%%%%%%%%%%%%%%%%%%%%%%%%%%%%%%%%%%%%%%%%
%%%%%%%%%%%%%%%%%%%%%%%%%%%%%%%%%%%%%%%%%%%%%%%%%%%%%%%%%%%%%%%%%%%%%%%%%%%%%%%

\section{Quasiregular curves are quasiminimal}
\label{sec:quasiminimality}

In this section we show that quasiregular curves satisfy Gromov's (homological) quasiminimality criterion \cite[Definition 6.36]{Gromov-book} if the $n$-volume form has bounded ratio. 

\begin{named}{Theorem \ref{thm:quasiminimality-intro}}
Let $\omega \in \Omega^n(N)$ be an $n$-volume form of bounded ratio. Then a $K$-quasiregular $\omega$-curve $f\colon M \to N$ is $K\Ratio(\omega)$-quasiminimal.
\end{named}

\begin{proof}
Let $W \Subset M$ be an $n$-manifold with boundary and let $h \colon M \to N$ be an $(f,W)$-competitor. Since $fW$ and $hW$ are homologous modulo $f(\partial W)$, there exists an $(n+1)$-chain $\Sigma$ for which $\partial \Sigma = fW - hW$, as chains. By de Rham's theorem, we may identify the duality pairing of the $n$-form $\omega$ with the $n$-chains $hW$ and $fW$ as integration. Thus we have that 
\[
\int_W f^*\omega - \int_W h^*\omega = \int_{fW} \omega - \int_{hW}\omega = \int_{\partial \Sigma} \omega = \int_\Sigma d\omega = 0.
\]

Since $\norm{\wedge^n Df} \le \norm{Df}^n$ and $\star h^*\omega \le (\norm{\omega}\circ h) \norm{\wedge^n Dh}$ almost everywhere, we have that
\begin{align*}
\int_W \norm{\wedge^n Df} \vol_M &\le 
\int_W \norm{Df}^n \vol_M 
\le \int_W \frac{(\norm{\omega}\circ f)}{\min_N \norm{\omega}} \norm{Df}^n \vol_M \\
&\le \frac{K}{\min_N \norm{\omega}} \int_W f^*\omega 
= \frac{K}{\min_N \norm{\omega}}\int_W h^*\omega \\
&\le \frac{K}{\min_N \norm{\omega}}\int_W (\norm{\omega}\circ h) \norm{\wedge^n Dh} \vol_M \\
&\le K \Ratio(\omega) \int_W \norm{\wedge^n Dh} \vol_M.
\end{align*}
We conclude that
\[
\int_W \norm{\wedge^n Df}^n \vol_M \le K \Ratio(\omega) \int_W \norm{\wedge^n Dh} \vol_M.
\]
\end{proof}

\begin{remark}
The proof of Theorem \ref{thm:quasiminimality-intro} is essentially the same as Gromov's argument in \cite[Example 6.3.7]{Gromov-book} for quasiminimality of the graph $Gf \colon M \to M\times N$, $x\mapsto (x,f(x))$, of a quasiregular mapping $f\colon M \to N$. The form $\omega$ in Gromov's argument is $\omega = \pi_M^*\vol_M + \pi_N^* \vol_N$, where $\pi_M \colon M\times N \to M$ and $\pi_N \colon M\times N \to N$ are the natural projections.
\end{remark}

%%%%%%%%%%%%%%%%%%%%%%%%%%%%%%%%%%%%%%%%%%%%%%%%%%%%%%%%%%%%%%%%%%%%%%%%%%%%%%%
%%%%%%%%%%%%%%%%%%%%%%%%%%%%%%%%%%%%%%%%%%%%%%%%%%%%%%%%%%%%%%%%%%%%%%%%%%%%%%%
%%%%%%%%%%%%%%%%%%%%%%%%%%%%%%%%%%%%%%%%%%%%%%%%%%%%%%%%%%%%%%%%%%%%%%%%%%%%%%%

\section{Liouville's theorem for entire quasiregular curves}
\label{sec:Liouville}

In this section, we prove a version of the Liouville's theorem. 

\begin{named}{Theorem \ref{thm:Liouville-intro}}
Let $N$ be a complete Riemannian $m$-manifold and $\omega \in \Omega^n(N)$ an exact $n$-volume form for $n\le m$. Then each bounded quasiregular $\omega$-curve $\R^n \to N$ is constant.
\end{named}

As for quasiregular mappings, the proof of Liouville's theorem is an application of Caccioppoli's inequality, which we formulate here as follows.

\begin{proposition}
\label{prop:Caccioppoli}
Let $f\colon M \to N$ be a $K$-quasiregular $\omega$-curve for an exact $n$-volume form $\omega\in \Omega^n(N)$ and let $\tau\in \Omega^{n-1}(N)$ be a potential of $\omega$, that is, $\omega = d\tau$. Then there exists a constant $C=C(n) >0$ having the property that, for every non-negative function $\psi \in C^\infty_0(M)$, 
\[
\int_M \psi^n f^*\omega \le C K^{n-1} \int_M |\grad \psi|^n \left( \frac{\norm{\tau}^n}{\norm{\omega}^{n-1}}\right) \circ f.
\]
\end{proposition}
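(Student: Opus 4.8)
The plan is to prove the Caccioppoli inequality by testing the identity $\omega = d\tau$ against the compactly supported weight $\psi^n$, integrating by parts to move the derivative off the form, and then bounding the resulting boundary-type term using the distortion inequality (QRC) together with Young's inequality. Concretely, I would start from $f^*\omega = f^*(d\tau) = d(f^*\tau)$, which holds weakly for Sobolev maps, and compute
\[
\int_M \psi^n f^*\omega = \int_M \psi^n\, d(f^*\tau).
\]
Since $\psi^n f^*\tau$ is a compactly supported $(n-1)$-form on $M$, Stokes' theorem gives $\int_M d(\psi^n f^*\tau) = 0$, and the Leibniz rule $d(\psi^n f^*\tau) = d(\psi^n)\wedge f^*\tau + \psi^n d(f^*\tau)$ lets me rewrite
\[
\int_M \psi^n f^*\omega = -\int_M d(\psi^n)\wedge f^*\tau = -n\int_M \psi^{n-1}\, d\psi \wedge f^*\tau.
\]

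The next step is a pointwise estimate of the integrand $\psi^{n-1}\, d\psi\wedge f^*\tau$. I would bound $|d\psi\wedge f^*\tau|$ by $|\grad\psi|\cdot \norm{f^*\tau}$, and then observe that the comass of $f^*\tau$ is controlled by $(\norm{\tau}\circ f)\,\norm{Df}^{n-1}$, since pulling back an $(n-1)$-form under $f$ introduces at most $\norm{Df}^{n-1}$ and replaces $\norm{\tau}$ by its value at $f$. This yields the pointwise bound
\[
\bigl|\psi^{n-1}\, d\psi\wedge f^*\tau\bigr| \le n\,\psi^{n-1}\,|\grad\psi|\,(\norm{\tau}\circ f)\,\norm{Df}^{n-1}.
\]
Now I invoke (QRC) in the form $\norm{Df}^n \le K(\star f^*\omega)/(\norm{\omega}\circ f)$, which says $\norm{Df}$ is controlled by a power of $f^*\omega$; the idea is to split the factor $\norm{Df}^{n-1}$ off against one copy of $\psi$ and absorb the remaining $\star f^*\omega$ back into the left-hand side.

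The heart of the argument, and the step I expect to be the main obstacle, is the Young-inequality bookkeeping that makes this absorption quantitatively sharp and produces the precise weight $\norm{\tau}^n/\norm{\omega}^{n-1}$. I would apply Young's inequality with exponents $n$ and $n/(n-1)$ to the product $(\psi^{n-1}\norm{Df}^{n-1})\cdot(|\grad\psi|(\norm{\tau}\circ f))$, splitting so that the term carrying $\psi^{n-1}\norm{Df}^{n-1}$ is raised to the power $n/(n-1)$, giving $\psi^n\norm{Df}^n$, while the term $|\grad\psi|(\norm{\tau}\circ f)$ is raised to the power $n$, giving $|\grad\psi|^n(\norm{\tau}^n\circ f)$. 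After inserting (QRC) to convert $\psi^n\norm{Df}^n$ into $K\psi^n(\star f^*\omega)/(\norm{\omega}\circ f)$, the first term becomes a constant multiple of $\int_M \psi^n f^*\omega$, which I absorb into the left-hand side by choosing the Young parameter appropriately; the surviving term is exactly $C K^{n-1}\int_M |\grad\psi|^n (\norm{\tau}^n/\norm{\omega}^{n-1})\circ f$. The delicate points are tracking the exact power of $K$ (the exponent $n-1$ arises from the $n/(n-1)$-th power after the $K$-factor is raised appropriately) and verifying that $f^*\tau$ has the claimed comass bound for a genuine $W^{1,n}_\loc$ map rather than a smooth one, which requires the standard approximation and the weak validity of $f^*d\tau = d f^*\tau$ in the Sobolev setting.
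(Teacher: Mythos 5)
Your overall architecture (Stokes' theorem plus the pointwise comass bound $|d\psi\wedge f^*\tau|\le|\grad\psi|\,(\norm{\tau}\circ f)\,\norm{Df}^{n-1}$, followed by an absorption argument with exponents $n$ and $n/(n-1)$) is the right one and matches the paper up to the final step, but the Young splitting you describe does not close. You split the integrand as $\bigl(\psi^{n-1}\norm{Df}^{n-1}\bigr)\cdot\bigl(|\grad\psi|\,(\norm{\tau}\circ f)\bigr)$, producing terms $\epsilon\,\psi^n\norm{Df}^n$ and $C_\epsilon\,|\grad\psi|^n(\norm{\tau}\circ f)^n$. Two things then go wrong. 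First, after inserting (QRC) the first term is $\epsilon K\,\psi^n(\star f^*\omega)/(\norm{\omega}\circ f)$, which is \emph{not} a constant multiple of $\psi^n(\star f^*\omega)$: the factor $1/(\norm{\omega}\circ f)$ is unbounded in general, since $\omega$ is only assumed non-vanishing and its comass may tend to $0$ along the image (the proposition makes no compactness or boundedness assumption); even where $\norm{\omega}$ is bounded below, the absorption would force a constant depending on $\inf\norm{\omega}$ rather than on $n$ and $K$ alone. Second, your surviving term carries the weight $(\norm{\tau}\circ f)^n$, not the claimed $\bigl(\norm{\tau}^n/\norm{\omega}^{n-1}\bigr)\circ f$; neither weight dominates the other in general, so the stated inequality cannot be recovered from yours.

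The missing idea is a weight distribution before the splitting: write
\[
\psi^{n-1}|\grad\psi|\,(\norm{\tau}\circ f)\,\norm{Df}^{n-1}
=\Bigl(\psi^{n-1}(\norm{\omega}\circ f)^{\frac{n-1}{n}}\norm{Df}^{n-1}\Bigr)\cdot\Bigl(|\grad\psi|\,(\norm{\tau}\circ f)\,(\norm{\omega}\circ f)^{-\frac{n-1}{n}}\Bigr),
\]
and only then apply Young (or H\"older) with exponents $n/(n-1)$ and $n$. Now the $n/(n-1)$-power term is $\epsilon\,\psi^n(\norm{\omega}\circ f)\norm{Df}^n\le \epsilon K\,\psi^n(\star f^*\omega)$, which absorbs cleanly into the left-hand side with $\epsilon\sim 1/(nK)$ and produces $C_\epsilon\sim C(n)K^{n-1}$, while the $n$-power term is exactly $|\grad\psi|^n\bigl(\norm{\tau}^n/\norm{\omega}^{n-1}\bigr)\circ f$. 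This pairing is precisely what the paper's proof does; the only cosmetic difference is that the paper uses the integral H\"older inequality and then divides by $\bigl(\int_M\psi^n f^*\omega\bigr)^{(n-1)/n}$, rather than absorbing pointwise via Young --- the two are equivalent, and both tacitly use that $\int_M\psi^n f^*\omega<\infty$, which follows from $f\in W^{1,n}_\loc(M,N)$, continuity of $f$, and the compact support of $\psi$.
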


\begin{proof}
Let $\zeta = \psi^n$. Then, by Stokes' theorem, 
\[
\int_M \zeta f^*\omega = \int_M \zeta d f^*\tau = \int_M d(\zeta f^*\tau) - \int_M d \zeta \wedge f^*\tau = - \int_M d\zeta \wedge f^*\tau. 
\]
Hence, by pointwise norm estimates,  
\begin{align*}
\int_M \zeta f^*\omega 
&\le C\int_M |\grad \zeta| (\norm{\tau}\circ f) |Df|^{n-1} 
\le C n \int_M \psi^{n-1}|\grad \psi| (\norm{\tau}\circ f) |Df|^{n-1},
\end{align*}
where $C=C(n)>0$. By H\"older's inequality, 
\begin{align*}
\int_M \zeta f^*\omega 
&\le C n \left( \int_M |\grad \psi|^n \frac{(\norm{\tau}\circ f)^n}{(\norm{\omega}\circ f)^{n-1}} \right)^{\frac{1}{n}} 
\left( \int_M \psi^n (\norm{\omega} \circ f) |Df|^n \right)^{\frac{n-1}{n}} \\
&\le C n K^{\frac{n-1}{n}} \left( \int_M |\grad \psi|^n \frac{(\norm{\tau}\circ f)^n}{(\norm{\omega}\circ f)^{n-1}} \right)^{\frac{1}{n}} 
\left( \int_M \zeta f^*\omega \right)^{\frac{n-1}{n}}.
\end{align*}
Thus
\[
\int_M \zeta f^*\omega \le C n^n K^{n-1} \int_M |\grad \psi|^n \frac{(\norm{\tau}\circ f)^n}{(\norm{\omega}\circ f)^{n-1}}.
\]
\end{proof}

Liouville's theorem is now an almost immediate consequence.

\begin{proof}[Proof of Theorem \ref{thm:Liouville-intro}]
Suppose that $f$ is bounded. It suffices to show that, for every $r>0$, we have 
\[
\int_{B^n(r)} f^*\omega = 0.
\]
Then $\norm{Df}= 0$ almost everywhere and $f$ is constant in $B^n(r)$ by the Poincar\'e inequality. 

Let $r>0$ and $\varepsilon>0$. Since $\mathrm{cap}_n(\bar B^n(r),\R^n) =0$, there exists $\psi \in C^\infty_0(\R^n)$ for which $\psi|_{B^n(r)} \equiv 1$ and 
\[
\int_{\R^n} |\grad \psi|^n \le \varepsilon.
\]
Since $\omega$ is exact, we may fix a potential $\tau\in \Omega^{n-1}(\R^m)$ of $\omega$. Since $N$ is complete and $f$ is bounded, we have that $f\R^n \Subset N$. Since $\tau$ is smooth and $\omega$ is smooth and non-vanishing, we further have that the function $\norm{\tau}^n/\norm{\omega}^{n-1}$ is bounded on $f\R^n$. Thus, by Caccioppoli's inequality, there exists a constant $C>0$ for which
\[
\int_{B^n(r)} f^*\omega \le \int_{\R^n} \psi^n f^*\omega \le C \int_\Omega |\grad \psi|^n \le C\varepsilon.
\]
The claim follows.
\end{proof}

\begin{remark}
\label{rmk:Liouville}
The previous Liouville's theorem admits a following variation: \emph{Let $N$ be a Riemannian $m$-manifold and $\omega\in \Omega^{n}(N)$ an $n$-volume form with a potential $\tau\in \Omega^{n-1}(N)$ for which the function $\norm{\tau}^n/\norm{\omega}^{n-1}$ is bounded. Then each quasiregular $\omega$-curve $\R^n \to N$ is constant.} 
\end{remark}

\begin{remark}
The version of Liouville's theorem in Remark \ref{rmk:Liouville} shows that for each non-zero $(n-1)$-covector $\zeta \in \wedge^{n-1}\R^{m-1}$ and $n$-volume form $\omega_0 = x_m^{-n} \zeta \wedge dx_m\in \Omega^n(\bH^m)$, a quasiregular $\omega$-curve $\R^n \to \bH^m$ is constant. For simplicity, suppose that $\zeta = dx_1 \wedge \cdots \wedge dx_{n-1}$. Then, in the upper half-space model $\bH^m = \R^{m-1}\times (0,\infty)$ of the hyperbolic $m$-space, we have that the $(n-1)$-form $\tau_0 = (-1)^n (n-1)^{-1} x_m^{1-n} dx_1\wedge \cdots \wedge dx_{n-1}$ is one of the potentials of $\omega_0$. Since $\norm{dx_1\wedge \cdots \wedge dx_{n-1}} = x_m^{n-1}$ and $\norm{dx_1\wedge \cdots \wedge dx_{n-1}\wedge dx_m} = x_m^n$, we have that $\norm{\tau_0} = (m+1)^{-1}$ and $\norm{\omega_0} = 1$. In particular, $\norm{\tau_0}^n/\norm{\omega_0}^{n-1}$ is bounded. The case for general $n$-covector $\zeta$ is similar. 

Note that there are easy examples of $n$-volume forms on $\bH^n$, which admit non-constant quasiregular curves from $\R^n$. For example, let $f\colon \R^n \to \R^n$ be a $K$-quasiregular map and fix $t>0$. Then the map $F=(f,0,t) \colon \R^n \to \R^n \times \R^{m-n-1}\times (0,\infty)$ is a $K$-quasiregular $\omega$-curve for $\omega = dx_1\wedge \cdots \wedge dx_n$. Clearly, the map $F$ is not a quasiregular $\omega_0$-curve. In fact, $F^*\omega_0 = 0$ almost everywhere.
\end{remark}

%%%%%%%%%%%%%%%%%%%%%%%%%%%%%%%%%%%%%%%%%%%%%%%%%%%%%%%%%%%%%%%%%%%%%%%%%%%%%%%
%%%%%%%%%%%%%%%%%%%%%%%%%%%%%%%%%%%%%%%%%%%%%%%%%%%%%%%%%%%%%%%%%%%%%%%%%%%%%%%
%%%%%%%%%%%%%%%%%%%%%%%%%%%%%%%%%%%%%%%%%%%%%%%%%%%%%%%%%%%%%%%%%%%%%%%%%%%%%%%

\section{Limit theorem}
\label{sec:limit}

In this section, we prove Theorem \ref{thm:limit-intro} which states that a locally uniform limit of $K$-quasiregular $\omega$-curves is also a $K$-quasiregular $\omega$-curve. Since the result is local, it suffices to prove the following local result.

\begin{theorem}
\label{thm:limit}
Let $\Omega \subset \R^n$ be a domain and let $(f_j)$ be a sequence of $K$-quasiregular $\omega$-curves $f_j \colon \Omega \to \R^m$ converging locally uniformly to a mapping $f\colon \Omega \to \R^m$. Then $f$ is a $K$-quasiregular $\omega$-curve.
\end{theorem}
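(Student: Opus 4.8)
The plan is to show that the limit map $f$ is continuous (automatic, as a locally uniform limit of continuous maps), lies in $W^{1,n}_\loc(\Omega,\R^m)$, and satisfies the distortion inequality~\eqref{eq:QRC} with the \emph{same} constant $K$; the only genuinely delicate point is the sharpness of this constant. The starting observation is that, since $\norm{\omega}\ge 0$ and $\norm{Df_j}^n\ge 0$, the inequality~\eqref{eq:QRC} forces $\star f_j^*\omega\ge 0$ almost everywhere, so the $n$-forms $f_j^*\omega$ give rise to \emph{non-negative} Radon measures $\mu_j=(\star f_j^*\omega)\,dx$. Because $f_j\to f$ locally uniformly, the images $f_j(\bar B)$ lie in a fixed compact set $Q\subset\R^m$ for every ball $B\Subset\Omega$; on $Q$ the non-vanishing form $\omega$ satisfies $\norm{\omega}\ge c>0$, and, writing $\omega=d\tau$ (possible since $\omega$ is closed on $\R^m$), the quantity $\norm{\tau}^n/\norm{\omega}^{n-1}$ is bounded on $Q$. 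Applying Caccioppoli's inequality (Proposition~\ref{prop:Caccioppoli}) with a fixed cutoff then bounds $\int_B\star f_j^*\omega$ uniformly in $j$, and $\int_B\norm{Df_j}^n\le (K/c)\int_B\star f_j^*\omega$ bounds $(f_j)$ in $W^{1,n}_\loc$. Passing to a subsequence, $Df_j\rightharpoonup Df$ weakly in $L^n_\loc$; since $f_j\to f$ in $L^n_\loc$, the weak limit is $Df$, so $f\in W^{1,n}_\loc$, and uniqueness of the limit promotes this to the whole sequence.

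The second step is the weak continuity of the pullback. For $\phi\in C_0^\infty(\Omega)$, integration by parts gives $\int_\Omega\phi\,f_j^*\omega=-\int_\Omega d\phi\wedge f_j^*\tau$. The coefficients of the $(n-1)$-form $f_j^*\tau$ are products of a coefficient of $\tau$ evaluated along $f_j$ with an $(n-1)\times(n-1)$ minor of $Df_j$. The former converge uniformly (continuity of the coefficients of $\tau$ and uniform convergence of $f_j$ with images in $Q$), while the $(n-1)$-minors of $Df_j$ converge weakly in $L^{n/(n-1)}_\loc$ to those of $Df$ by the classical weak continuity of minors. Hence $f_j^*\tau\rightharpoonup f^*\tau$ weakly in $L^{n/(n-1)}_\loc$, so $\int_\Omega\phi\,f_j^*\omega\to\int_\Omega\phi\,f^*\omega$; that is, $\mu_j\rightharpoonup\mu:=(\star f^*\omega)\,dx$ vaguely, and $\mu$ is again non-negative. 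Since $\mu$ is absolutely continuous, $\mu(\partial B)=0$ for every ball, and for non-negative measures vague convergence upgrades to $\mu_j(B)\to\mu(B)$, because $\mu(B)\le\liminf_j\mu_j(B)\le\limsup_j\mu_j(\bar B)\le\mu(\bar B)=\mu(B)$.

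Finally I combine these with lower semicontinuity. Fix a ball $B\Subset\Omega$. The weight $\norm{\omega}\circ f_j$ converges uniformly to $\norm{\omega}\circ f$ and $\int_B\norm{Df_j}^n$ is uniformly bounded, so $\int_B(\norm{\omega}\circ f_j)\norm{Df_j}^n$ and $\int_B(\norm{\omega}\circ f)\norm{Df_j}^n$ have the same $\liminf$; by convexity of $A\mapsto\norm{A}^n$ and weak lower semicontinuity of the associated (fixed-weight) functional, $\int_B(\norm{\omega}\circ f)\norm{Df}^n\le\liminf_j\int_B(\norm{\omega}\circ f_j)\norm{Df_j}^n$. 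Invoking~\eqref{eq:QRC} for each $f_j$ and the measure convergence above yields, for every ball $B\Subset\Omega$, $\int_B(\norm{\omega}\circ f)\norm{Df}^n\le K\liminf_j\mu_j(B)=K\mu(B)=K\int_B\star f^*\omega$. Dividing by $|B|$ and letting the radius tend to $0$, the Lebesgue differentiation theorem gives $(\norm{\omega}\circ f)\norm{Df}^n\le K(\star f^*\omega)$ almost everywhere, which is exactly~\eqref{eq:QRC} with the sharp constant $K$. The main obstacle is precisely this sharpness: lower semicontinuity over a fixed domain gives only an integral bound with a non-sharp constant, and it is the non-negativity $\star f_j^*\omega\ge 0$ — which rules out concentration and cancellation and converts vague convergence into the exact equality $\mu_j(B)\to\mu(B)$ — that, applied on all balls and combined with differentiation, recovers the pointwise inequality with constant $K$ and substitutes for the local index theory used in the classical quasiregular setting.
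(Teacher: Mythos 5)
Your proof is correct, and its skeleton matches the paper's: a Caccioppoli-based bound putting $f$ in $W^{1,n}_\loc$ with weak subsequential convergence (the paper's Lemma \ref{lemma:limit_is_Sobolev}), weak convergence of the pullbacks $f_j^*\omega$ (Lemma \ref{lemma:omega_convergence}), a lower semicontinuity step, and Lebesgue differentiation on balls. The genuine divergence is at the sharp-constant step. The paper proves only the \emph{unweighted} semicontinuity $\int_U\norm{Df}^n\le\liminf_j\int_U\norm{Df_j}^n$ (Lemma \ref{lemma:lower_sc}, via convexity and dual vector fields, following Iwaniec--Martin) and then handles the weight by an oscillation argument: it localizes to balls whose images lie in a set $G$ with $\max_G\norm{\omega}-\min_G\norm{\omega}<\varepsilon$, pulls the weight in and out at the cost of the factor $\norm{\omega}_{L^\infty(G)}/(\norm{\omega}_{L^\infty(G)}-\varepsilon)$, and lets $\varepsilon\to0$. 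You instead swap the moving weight $\norm{\omega}\circ f_j$ for the fixed weight $\norm{\omega}\circ f$ (legitimate, by uniform convergence of the weights together with the uniform bound on $\int_B\norm{Df_j}^n$) and then invoke weak lower semicontinuity of the \emph{weighted} functional $u\mapsto\int_B(\norm{\omega}\circ f)\norm{Du}^n$; this is a standard fact for non-negative convex integrands, and indeed follows by carrying the fixed weight through the argument of Lemma \ref{lemma:lower_sc}, so it is available, and it spares you the $\varepsilon$-bookkeeping. Two smaller substitutions are also worth noting: where the paper proves weak convergence of $f_j^*\omega$ by a hands-on telescoping integration by parts, you cite the classical weak continuity of $(n-1)\times(n-1)$ minors -- the same content, since the telescoping argument is precisely a proof of that fact (your passage from $C^\infty_0$ test functions to convergence on balls does use the uniform local mass bounds, which your Caccioppoli step supplies); and where the paper exploits $\star f_j^*\omega\ge0$ through test functions $\varphi\ge\chi_B$, you use the portmanteau theorem for non-negative measures together with $\mu(\partial B)=0$, which is equivalent. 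Both routes rest on the same two pillars -- non-negativity of $\star f_j^*\omega$ and convexity-based lower semicontinuity -- and both deliver the sharp constant $K$; yours is a modest but real variant that trades the paper's freezing-the-weight trick for a slightly stronger (weighted) semicontinuity statement.
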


\begin{proof}[Proof of Theorem \ref{thm:limit-intro} assuming Theorem \ref{thm:limit}]
To show that the limiting map has the same distortion as the maps in the sequence, let $a\in \N$ be an auxiliary parameter.
%Let $k\in \N$. Let also 
Let now $\{(\Omega_\alpha,\varphi_\alpha)\}_\alpha$ and $\{ (V_\beta,\psi_\beta)\}_\beta$ be atlases of $M$ and $N$, respectively, consisting of $(1+1/a)$-bilipschitz charts and having the property that, for each index $\alpha$, there exists an index $\beta$ for which $f\Omega_\alpha \Subset V_\beta$. Existence of such atlases follow from the exponential maps $TM \to M$ and $TN \to N$ of $M$ and $N$, respectively, and continuity of $f$.

By Theorem \ref{thm:limit} and the chain rule in each $\Omega_\alpha$, we obtain that $f$ is in $W^{1,n}_\loc(\Omega_\alpha, N)$ for each $\alpha$ and that 
\[
(\norm{\omega}\circ f) \norm{Df}^n \le K (1+1/a)^{4n} f^*\omega
\]
almost everywhere in $\Omega_\alpha$ for each $\alpha$, and hence almost everywhere in $M$. 

Thus, almost everywhere in $M$, we have that
\[
(\norm{\omega}\circ f) \norm{Df}^n \le K f^*\omega
\]
as claimed.
\end{proof}

The proof of Theorem \ref{thm:limit} follows the idea of the same result for quasiregular maps; see Rickman's book \cite[Section VI.8]{Rickman-book}.

We separate the first part of the proof as a separate lemma and show that locally uniform limits of quasiregular curves are in the right Sobolev class. As in the case of quasiregular maps, this is essentially an application of the Caccioppoli inequality (Proposition \ref{prop:Caccioppoli}).
\begin{lemma}
\label{lemma:limit_is_Sobolev}
Let $f \colon \Omega \to \R^m$ be a locally uniform limit of a sequence $(f_j)$ of $K$-quasiregular $\omega$-curves $f_j \colon \Omega \to \R^m$. Then $f\in W^{1,n}_\loc(\Omega, \R^m)$ and, for each domain $U \Subset \Omega$, there exists a subsequence $(f_{i_j})$ of $(f_j)$ converging weakly to $f$ in $W^{1,n}(U,\R^m)$.
\end{lemma}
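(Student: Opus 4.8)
The plan is to show that $(f_j)$ is bounded in $W^{1,n}(U,\R^m)$ for every $U \Subset \Omega$ and then to extract a weakly convergent subsequence by reflexivity, identifying its weak limit with $f$ by means of the uniform convergence. The uniform Sobolev bound will come from the Caccioppoli inequality (Proposition \ref{prop:Caccioppoli}), which requires $\omega$ to be exact. This is automatic in the present Euclidean setting: $\omega$ is a closed $n$-form on $\R^m$, hence exact by the Poincaré lemma, so I fix once and for all a potential $\tau \in \Omega^{n-1}(\R^m)$ with $d\tau = \omega$.

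The heart of the argument is the uniform gradient bound. I would fix $U \Subset \Omega$ and choose $\psi \in C^\infty_0(\Omega)$ with $0 \le \psi \le 1$ and $\psi \equiv 1$ on a neighborhood of $\overline U$. Since $f_j \to f$ locally uniformly, the images $f_j(\spt\psi)$ lie (for $j$ large, and hence after enlarging, for all $j$) in a single fixed compact set $\mathcal{K} \subset \R^m$. On $\mathcal{K}$ the continuous function $\norm{\tau}^n/\norm{\omega}^{n-1}$ is bounded, say by $C_1$, and $\norm{\omega} \ge c > 0$. Applying Proposition \ref{prop:Caccioppoli} to each $f_j$ and inserting these bounds gives
\[
\int_\Omega \psi^n f_j^*\omega \le C K^{n-1} C_1 \int_\Omega |\grad\psi|^n =: A,
\]
a constant independent of $j$. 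Here I use that $\star f_j^*\omega \ge 0$ almost everywhere, an immediate consequence of the distortion inequality \eqref{eq:QRC}, so that $\psi^n f_j^*\omega \ge 0$ and therefore $\int_U f_j^*\omega \le \int_\Omega \psi^n f_j^*\omega \le A$. Feeding \eqref{eq:QRC} back in, on $U$ one has $\norm{Df_j}^n \le (K/c)\,\star f_j^*\omega$, whence $\int_U \norm{Df_j}^n \le (K/c)A$ uniformly in $j$. Together with the uniform $L^\infty$-bound on $(f_j)$ over $\overline U$ furnished by the local uniform convergence, this shows $(f_j)$ is bounded in $W^{1,n}(U,\R^m)$.

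It then remains to pass to the limit. Since $W^{1,n}(U,\R^m)$ is reflexive, a subsequence $(f_{i_j})$ converges weakly in $W^{1,n}(U,\R^m)$ to some $g$. Weak $W^{1,n}$-convergence implies weak $L^n$-convergence, while the uniform convergence $f_{i_j}\to f$ forces strong $L^n$-convergence on $U$; by uniqueness of weak limits, $g = f$ almost everywhere. Hence $f|_U \in W^{1,n}(U,\R^m)$ with $f_{i_j}\rightharpoonup f$ weakly in $W^{1,n}(U,\R^m)$, and since $U \Subset \Omega$ was arbitrary, $f \in W^{1,n}_\loc(\Omega,\R^m)$.

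I expect the functional-analytic part (weak compactness and identification of the limit) to be entirely routine; the main obstacle is the uniform bound, that is, converting the Caccioppoli estimate into a bound independent of $j$. The essential point making this work is that local uniform convergence confines all the maps $f_j$ to the single compact target $\mathcal{K}$, on which the geometric coefficients $\norm{\tau}^n/\norm{\omega}^{n-1}$ and $\norm{\omega}$ are controlled uniformly in $j$; this is precisely what lets the right-hand side of the Caccioppoli inequality be estimated by a constant depending only on $\psi$, $K$, and $\mathcal{K}$.
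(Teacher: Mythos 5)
Your proposal is correct and follows essentially the same route as the paper: a Caccioppoli-based uniform bound on $\int_U \norm{Df_j}^n$ (using exactness of $\omega$ via the Poincar\'e lemma, compact containment of the images $f_j(\spt\psi)$, and the positivity $\star f_j^*\omega \ge 0$ forced by \eqref{eq:QRC}), followed by weak compactness in the reflexive space $W^{1,n}(U,\R^m)$ and identification of the weak limit with $f$ through strong $L^n$-convergence. The only cosmetic difference is that you split the chain of inequalities into two passes through \eqref{eq:QRC}, whereas the paper chains them in a single estimate; the content is identical.
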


\begin{proof}
Let $U \Subset \Omega$ be a domain and $\psi \in C^\infty_0(\Omega)$ a non-negative function satisfying $\psi|_U \equiv 1$. Let $W \Subset \Omega$ be a domain containing the support of $\psi$. Since $(f_j)$ converges locally uniformly, there exists a domain $V \Subset \R^m$ containing all images $f_jW$ and $fW$. 

Since $\omega$ is closed, it is exact. Let $\tau\in \Omega^{n-1}(\R^m)$ be a potential of $\omega$, that is, $d\tau = \omega$. Since $V$ has compact closure, we have that $y \mapsto \norm{\tau(y)}^n/\norm{\omega(y)}^{n-1}$ is a bounded function on $V$. Thus, we have by the Caccioppoli estimate (Proposition \ref{prop:Caccioppoli}) that there exists a constant $C=C(n,\omega|_V, K, \psi)>0$ for which 
\begin{align*}
\min_{y\in V} \norm{\omega_y} \int_U \psi^n \norm{Df_j}^n &\le 
\int_U \psi^n (\norm{\omega}\circ f_j) \norm{Df_j}^n \le \int_\Omega \psi^n K f_j^*\omega \le C.
\end{align*}
for all $j\in \N$. Since $\min_{y\in V}\norm{\omega_y} > 0$, we have that $(f_j)$ is a bounded sequence in $W^{1,n}(U,\R^m)$. By weak compactness, there exists a subsequence $(f_{j_i})$ converging weakly in $W^{1,n}(U,\R^m)$ to a map $\hat f \colon U \to \R^m$. Since $f_j \to f$ in $L^n(U,\R^m)$, we have in addition that $f=\hat f$. Thus $f\in W^{1,n}(U,\R^m)$. We refer to \cite[Proposition VI.7.9]{Rickman-book} for details.
\end{proof}

\begin{lemma}
\label{lemma:omega_convergence}
Let $f \colon \Omega \to \R^m$ be a locally uniform limit of a sequence $(f_j)$ of $K$-quasiregular $\omega$-curves $f_j \colon \Omega \to \R^m$. Then $f^*_j \omega \to f^*_j \omega$ weakly, that is, for each non-negative $\zeta \in C^\infty_0(\Omega)$,  
\begin{equation}
\label{eq:omega_convergence}
\int_\Omega \zeta f_j^*\omega \to \int_\Omega \zeta f^*\omega
\end{equation}
as $j\to \infty$.
\end{lemma}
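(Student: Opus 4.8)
The plan is to reduce the statement, via exactness of $\omega$ and Stokes' theorem, to the \emph{weak continuity of the $(n-1)\times(n-1)$ subdeterminants} of the differentials $Df_j$ under weak $W^{1,n}$-convergence. The point is that $f_j^*\omega$ cannot be controlled by uniform convergence alone, but after one integration by parts it is governed by minors of $Df_j$ of order $n-1 < n$, which \emph{do} pass to the weak limit.

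Fix a non-negative $\zeta \in C^\infty_0(\Omega)$ and set $W = \spt \zeta \Subset \Omega$. Since $\omega$ is closed and $\R^m$ is contractible, $\omega$ is exact; fix a potential $\tau \in \Omega^{n-1}(\R^m)$ with $d\tau = \omega$. Writing $\tau = \sum_{|I|=n-1}\tau_I\, dy_I$, the pullback is $f_j^*\tau = \sum_I (\tau_I\circ f_j)\, df_{j,i_1}\wedge\cdots\wedge df_{j,i_{n-1}}$, so each coefficient is a product of a bounded continuous factor $\tau_I\circ f_j$ and an $(n-1)$-minor of $Df_j$. Exactly as in the proof of Proposition~\ref{prop:Caccioppoli}, the distributional identity $d(f_j^*\tau)=f_j^*\omega$ together with integration by parts against the compactly supported $\zeta$ yields
\[
\int_\Omega \zeta f_j^*\omega = -\int_\Omega d\zeta\wedge f_j^*\tau,
\]
and the same identity holds for the limit map $f$, which lies in $W^{1,n}_\loc(\Omega,\R^m)$ by Lemma~\ref{lemma:limit_is_Sobolev}. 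It therefore suffices to prove that $\int_\Omega d\zeta\wedge f_j^*\tau \to \int_\Omega d\zeta\wedge f^*\tau$.

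For this I would pass to a subsequence along which, by Lemma~\ref{lemma:limit_is_Sobolev}, $f_j\rightharpoonup f$ weakly in $W^{1,n}(U,\R^m)$ on a neighborhood $U$ of $W$. Now I invoke the weak continuity of minors (the null-Lagrangian property of subdeterminants; see e.g.\ \cite{Reshetnyak-book} or \cite{Iwaniec-Martin-book}): since the minor order $n-1$ is strictly below the Sobolev exponent $n$, the $(n-1)$-minors of $Df_j$ are bounded in $L^{n/(n-1)}(U)$ and converge weakly there to the corresponding minors of $Df$. On the other hand $\tau_I\circ f_j \to \tau_I\circ f$ uniformly on $W$, by continuity of $\tau_I$ and uniform convergence of $f_j$, so each factor $(\partial_a\zeta)(\tau_I\circ f_j)$ converges strongly in $L^n(U)$. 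As the exponents $n$ and $n/(n-1)$ are conjugate, the product of a strongly $L^n$-convergent sequence with a weakly $L^{n/(n-1)}$-convergent sequence converges to the product of the limits; summing the finitely many terms of $d\zeta\wedge f_j^*\tau$ gives the claimed convergence along the subsequence.

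Finally, to upgrade from the subsequence to the full sequence, I note that the limiting value $\int_\Omega \zeta f^*\omega$ does not depend on the chosen subsequence: any weakly $W^{1,n}$-convergent subsequence of $(f_j)$ must have limit $f$, since $f_j\to f$ already in $L^n(U)$. Hence every subsequence of $\left(\int_\Omega \zeta f_j^*\omega\right)_j$ admits a further subsequence converging to $\int_\Omega \zeta f^*\omega$, which forces the whole sequence to converge. The main obstacle is precisely the weak-continuity-of-minors step: it is the only place where weak $W^{1,n}$-convergence (not merely uniform convergence) is used essentially, and it is what lets the pullbacks pass to the limit despite the absence of strong gradient convergence. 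The remaining ingredients---Stokes, the distributional pullback identity $d(f^*\tau)=f^*\omega$ for $W^{1,n}$-maps, and the strong-times-weak product---are routine.
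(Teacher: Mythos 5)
Your proof is correct, but it takes a genuinely different route at the key step. Both arguments begin the same way: exactness of $\omega$ plus one integration by parts against $\zeta$ reduce the problem to showing $\int_\Omega d\zeta \wedge f_j^*\tau \to \int_\Omega d\zeta \wedge f^*\tau$. From there the paper does \emph{not} invoke any weak convergence: it chooses the potential in the special form $\omega = \sum_J d(\tau_J\, dx_J)$, so that each piece is a product of exact $1$-forms $du_1\wedge\cdots\wedge du_n$, and then runs an explicit telescoping identity, moving the factor $h_{k,j}-h_k$ out of the derivative and estimating by H\"older. This uses only $W^{1,n}$-boundedness of $(f_j)$ (from Caccioppoli) together with $\norm{h-h_j}_{L^\infty(U)}\to 0$, gives a quantitative rate, and handles the full sequence with no subsequence extraction. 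You instead keep $\tau$ general, separate the continuous coefficients $\tau_I\circ f_j$ from the $(n-1)$-minors of $Df_j$, and cite the weak continuity of minors: since the minor order $n-1$ is strictly below the Sobolev exponent $n$, the minors converge weakly in $L^{n/(n-1)}(U)$ along a weakly $W^{1,n}$-convergent subsequence, and the strong-times-weak pairing plus a subsequence-uniqueness argument finishes the proof. This is valid: the cited theorem applies in the regime $p=n>k=n-1$ (including rectangular differentials), the pairing argument is standard, and the upgrade to the full sequence is correct because any weak $W^{1,n}$-limit of a subsequence must equal $f$. In effect, the paper's telescoping is an inlined, self-contained proof of exactly the instance of the minors theorem you black-box. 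What your version buys is brevity and a clear conceptual explanation (the pullback is governed by minors of order below the integrability exponent); what the paper's version buys is self-containedness, avoidance of weak-compactness bookkeeping, and an explicit error bound in terms of $\norm{f-f_j}_{L^\infty}$, which is why it can conclude for the whole sequence directly.
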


\begin{proof}
Let $\zeta\in C^\infty_0(\Omega)$ be non-negative function and let $U \Subset \Omega$ be a domain containing the support of $\zeta$. Since $f_j \to f$ locally uniformly, we may also fix a domain $V\Subset \R^m$ which contains the union $fU \cup \bigcup_j f_jU$.

Since $\omega$ is closed, it is exact, that is, $\omega = \sum_J d(\tau_J dx_J)$, where $J=(j_1,\ldots, j_{n-1})$ is a $(n-1)$-multi-index and, for each $J$, $\tau_J\in C^\infty(\R^m)$. For each $J$, let also $\omega_J = d\tau_J \wedge dx_J$. Then $\omega = \sum_J \omega_J$ and it suffices to prove \eqref{eq:omega_convergence} for each $\omega_J$.

Let $J$ be an $(n-1)$-multi-index and set $u_1 = \tau_J$ and $u_i = x_{i_{i-1}}$ for each $i\in \{1,\ldots, n-1\}$. Then $\omega_J = du_1 \wedge \cdots \wedge du_n$. For each $i=1,\ldots, n$, we denote $h_i = u_i \circ f$ and further, for each $j\in \N$, we set $h_{i,j} = u_i \circ f_j$. Then $f^*\omega = dh_1 \wedge \cdots \wedge dh_n$ and $f_j^*\omega = dh_{i,j} \wedge \cdots \wedge dh_{n,j}$. 

For the standard telescoping argument based on integration by parts, we observe first that
\begin{align*}
f_j^*\omega_J - f^*\omega_J 
&= dh_{1,j}\wedge \cdots \wedge dh_{n,j} - dh_1 \wedge \cdots \wedge dh_n \\
&= \sum_{k=1}^n dh_{1,j}\wedge dh_{k-1,j} \wedge (dh_{k,j}-dh_k) \wedge dh_{k+1,j} \wedge \cdots \wedge dh_n \\
&= \sum_{k=1}^n dh_{1,j}\wedge dh_{k-1,j} \wedge d(h_{k,j}-h_k) \wedge dh_{k+1,j} \wedge \cdots \wedge dh_n.
\end{align*}

Since the form $dh_{1,j}\wedge \cdots \wedge dh_{k-1,j} \wedge d(\zeta(h_{k,j}-h_k)) \wedge dh_{k+1,j} \wedge \cdots \wedge dh_n$ is exact and compactly supported in $\Omega$, we have the telescoping equality
\begin{align*}
&\int_\Omega \zeta \left( f_j^*\omega_J- f^*\omega_J\right) \\
&\quad = \sum_{k=1}^n \int_\Omega dh_{1,j}\wedge dh_{k-1,j} \wedge \zeta d(h_{k,j}-h_k) \wedge dh_{k+1,j} \wedge \cdots \wedge dh_n \\ 
&\quad = \sum_{k=1}^n \int_\Omega (h_k - h_{k,j}) dh_{1,j}\wedge dh_{k-1,j} \wedge d\zeta \wedge dh_{k+1,j} \wedge \cdots \wedge dh_n.
\end{align*}

As usual, we have now a pointwise inequality 
\begin{align*}
&|dh_{1,j}\wedge dh_{k-1,j} \wedge d\zeta \wedge dh_{k+1,j} \wedge \cdots \wedge dh_n| \\
&\quad \le |f_j^*du_1| \cdots |f_j^*du_{k-1}|\cdot |d\zeta|\cdot |f^*du_{k+1}| \cdots |f^*du_n| \\
&\quad \le (\max_k |\grad u_k|_{L^\infty(V)})^{n-1} |\grad \zeta| \norm{Df_j}^{k-1} \norm{Df}^{n-k} 
\end{align*}
almost everywhere in $\Omega$. Thus, by H\"older's inequality, we have the estimate
\begin{align*}
&\left| \int_\Omega \zeta \left( f_j^*\omega_J- f^*\omega_J\right) \right| \\
&\le C\left( \int_U \norm{Df_j}^{k-1} \norm{Df}^{n-k} \right)\norm{h-h_j}_{L^\infty(U)} \\
&\le C \left( \int_U \norm{Df_j}^{n} \right)^{(k-1)/n} \left( \int_U \norm{Df}^{n\frac{n-k}{n-k+1}} \right)^{(n-k+1)/n}\norm{h-h_j}_{L^\infty(U)} \\
&\le C \left( \int_U \norm{Df_j}^{n} \right)^{(k-1)/n} \left( \int_U \norm{Df}^{n} \right)^{(n-k)/n}\norm{h-h_j}_{L^\infty(U)},
\end{align*}
where constant $C=C(u_1,\ldots, u_n,\zeta,U)$ depends only on norms of $u_1,\ldots, u_m$ and $\grad \zeta$, and on the volume of $U$. By Caccioppoli's inequality the sequence $(f_j|_U)$ is bounded in $W^{1,n}(U,\R^n)$. Since $\norm{h-h_j}_{L^\infty(U)} \to 0$ as $j \to \infty$, the claim follows.
\end{proof}

We are now ready to finish the proof of the limit theorem (Theorem \ref{thm:limit}). So far we have followed the strategy in \cite[Section VI.8]{Rickman-book}. To obtain the sharp constant, we move now to follow the proof with the argument of Iwaniec and Martin \cite[Theorem 8.7.1]{Iwaniec-Martin-book} for the same theorem.  We do not know if the method in the proof of \cite[Theorem VI.8.6]{Rickman-book} admits an adaptation in our current setting.
 
We separate the proof for the lower semicontinuity of the operator norm from the argument of Iwaniec and Martin as a separate lemma.

\begin{lemma}
\label{lemma:lower_sc}
Let $\Omega \subset \R^n$ be a domain and let $(f_j)$ be a sequence in $W^{1,n}_\loc(\Omega, \R^m)$, which converges weakly to a map $f\in W^{1,n}_\loc(\Omega,\R^m)$. Then, for each domain $U \Subset \Omega$,
\[
\int_U \norm{Df}^n \le \liminf_{j \to \infty} \int_U \norm{Df_j}^n.
\]
\end{lemma}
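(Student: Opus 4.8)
The plan is to establish the lower semicontinuity of the functional $u \mapsto \int_U \norm{Du}^n$ under weak $W^{1,n}$-convergence, where $\norm{\cdot}$ denotes the operator norm. The natural framework is the theory of lower semicontinuity for integral functionals of the gradient: a functional $\int_U \Phi(Du)$ is (sequentially) weakly lower semicontinuous on $W^{1,n}$ precisely when the integrand $\Phi$ is convex (for functionals depending only on the gradient, convexity is both necessary and sufficient). Here the integrand is $\Phi(A) = \norm{A}^n$, where $A$ ranges over $m\times n$ matrices and $\norm{A}$ is its operator norm. Thus the first and essentially only analytic point I would verify is that $A \mapsto \norm{A}^n$ is a convex function on the space of $m\times n$ matrices.

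\emph{Convexity of the integrand.} The operator norm $A \mapsto \norm{A}$ is a genuine norm on the (finite-dimensional) vector space of $m\times n$ matrices: it is positively homogeneous and satisfies the triangle inequality $\norm{A+B} \le \norm{A}+\norm{B}$, since $\norm{A} = \sup_{|v|\le 1} |Av|$ is a supremum of the seminorms $A \mapsto |Av|$. Every norm is convex, and the map $t \mapsto t^n$ is convex and non-decreasing on $[0,\infty)$; composing a non-decreasing convex function with a convex function yields a convex function. Hence $A \mapsto \norm{A}^n$ is convex. In fact it suffices that the integrand be \emph{quasiconvex}, which convexity implies, but here plain convexity is immediate and cleaner.

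\emph{Invoking the semicontinuity theorem.} With convexity in hand, I would cite the standard weak lower semicontinuity result for integral functionals depending only on the gradient (this is classical; it follows, for instance, from representing the convex function $\Phi$ as a supremum of affine functions $\Phi(A) = \sup_k (\langle L_k, A\rangle + c_k)$ and using that weak convergence $f_j \rightharpoonup f$ in $W^{1,n}(U)$ forces $\int_U \langle L_k, Df_j\rangle \to \int_U \langle L_k, Df\rangle$ for each fixed linear functional $L_k$). Testing against each affine minorant and taking the supremum over $k$ yields
\[
\int_U \Phi(Df) \le \liminf_{j\to\infty} \int_U \Phi(Df_j),
\]
which is exactly the asserted inequality $\int_U \norm{Df}^n \le \liminf_{j\to\infty} \int_U \norm{Df_j}^n$. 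Since the statement is restricted to $U \Subset \Omega$, the relevant gradients lie in $L^n(U)$ and the weak convergence hypothesis applies directly on $U$.

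The main obstacle, such as it is, is purely one of citation and of confirming that the operator norm (as opposed to the more commonly tabulated Hilbert--Schmidt norm, for which the $n$-th power convexity is also standard) does yield a genuine norm on matrices so that the convexity argument applies verbatim; once convexity of $A \mapsto \norm{A}^n$ is secured, the semicontinuity is a black-box invocation of the classical theory (see, e.g., the standard references on the calculus of variations, or the treatment in Iwaniec--Martin whose argument this lemma feeds into). I do not expect to need any structure specific to quasiregular curves here: this lemma is a generic fact about convex gradient functionals.
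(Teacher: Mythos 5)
Your proposal is correct in substance and rests on the same mechanism as the paper's proof: convexity of $A \mapsto \norm{A}^n$ together with the fact that weak convergence of $Df_j$ in $L^n(U)$ can be tested against a fixed $L^{n/(n-1)}$ matrix field. The difference is packaging. The paper does not cite the general semicontinuity theorem; following Iwaniec--Martin, it reproves it in this special case by fixing measurable unit vector fields $\xi,\zeta$ with $\norm{Df(x)} = \langle Df(x)\xi(x),\zeta(x)\rangle$ a.e.\ and using the pointwise subgradient inequality
\[
\norm{Df_j}^n - \norm{Df}^n \ge n\,\bigl\langle Df_j - Df,\ \norm{Df}^{n-1}\,\xi\otimes\zeta \bigr\rangle ,
\]
which integrates to the claim because $\norm{Df}^{n-1}\xi\otimes\zeta \in L^{n/(n-1)}(U)$. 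This is exactly your ``affine minorant'' argument, but with the minorant chosen \emph{pointwise at $Df(x)$} (a measurable subgradient field) rather than from a fixed countable family. That distinction matters for your inline sketch: as written, testing against each fixed affine minorant $L_k$ and taking the supremum only yields $\liminf_j \int_U \Phi(Df_j) \ge \sup_k \int_U (\langle L_k, Df\rangle + c_k)$, and the supremum cannot simply be moved inside the integral; one needs either a localization/measurable-selection step (which is precisely what the paper's $\xi,\zeta$ accomplish) or Mazur's lemma (strong lower semicontinuity plus convexity of the functional implies weak lower semicontinuity). Since you invoke the classical theorem as a black box rather than proving it, this is looseness in a parenthetical rather than a fatal gap, but the repair is exactly the paper's device, so it is worth internalizing. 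One small correction: in the vectorial case $\min(m,n)\ge 2$, convexity is \emph{not} necessary for weak lower semicontinuity of gradient functionals --- quasiconvexity is the correct necessary and sufficient condition (null Lagrangians such as subdeterminants are the standard counterexamples); convexity is merely sufficient, which is all your argument uses.
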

\begin{proof}
Let $\varphi \in C^\infty_0(\Omega)$ a non-negative function satisfying $\varphi|_U \equiv  1$.

Following Iwaniec and Martin, we fix measurable unit vector fields $\xi \colon \Omega \to \R^n$ and $\zeta \colon \Omega \to \R^m$ satisfying 
\[
\norm{Df(x)} = \max_{|v|=1} |Df(x)v| = |Df(x) \xi(x)| = \langle Df(x) \xi(x), \zeta(x) \rangle
\]
almost everywhere. Then, by the convexity of the function $t\mapsto t^n$, we have that
\begin{align*}
\norm{Df_j}^n - \norm{Df}^n &\ge n \norm{Df}^{n-1}\left( \norm{Df_j}-\norm{Df} \right) \\
&\ge n \norm{Df}^{n-1} \langle Df_j \xi - Df \xi, \zeta \rangle \\
&= n \langle Df_j-Df, \norm{Df}^{n-1} \xi \otimes \zeta \rangle
\end{align*}
where $\xi\otimes \zeta \colon \Omega \to \R^{m\times n}$ is the matrix field $x\mapsto \zeta(x) \xi(x)^\top$.

Since $\norm{Df}^{n-1}\in L^{n/(n-1)}_\loc(\Omega)$ and $\xi$ and $\zeta$ have pointwise unit length, we have that $\norm{Df}^{n-1} \xi\otimes \zeta \in L^{n/(n-1)}_\loc(\Omega, \R^{m\times n})$. Since $Df_j \to Df$ weakly in $L^n(U,\R^{m\times n})$ and $\norm{Df}^{n-1} \xi\otimes \zeta \in L^{n/(n-1)}(U,\R^{m\times n})$, we have that
\[
\int_U \langle Df_j-Df, \norm{Df}^{n-1} \xi \otimes \zeta \rangle \to 0
\]
as $j \to \infty$. Thus
\[
\int_U \norm{Df}^n \le \liminf_{j \to \infty} \int_U \norm{Df_j}^n.
\]
\end{proof}

\begin{proof}[Proof of Theorem \ref{thm:limit}]
By  Lemma \ref{lemma:limit_is_Sobolev}, we have that $f\in W^{1,n}_\loc(\Omega, \R^m)$. Thus it suffices to show that the distortion inequality
\[
(\norm{\omega}\circ f)\norm{Df}^n \le K f^*\omega
\]
holds almost everywhere in $\Omega$.

Let now $x\in \Omega$ and $0<\varepsilon<\norm{\omega(x)}$. Since $\omega$ is continuous, we may fix a Euclidean ball $G=B^m(f(x),R) \Subset \R^m$ for which $\max_G \norm{\omega} - \min_G \norm{\omega} < \varepsilon$. Since $f_j \to f$ locally uniformly, we may, by passing to a subsequence, fix a Euclidean ball $B=B^n(x,r) \Subset \Omega$ for which the set $fB\cup \bigcup_j f_jB$ is compactly contained in $G$. Let now $\varphi \in C^\infty_0(B)$ be a non-negative function satisfying $\varphi|_B \equiv 1$.

By passing to a subsequence if necessary, we may assume, again by Lemma \ref{lemma:limit_is_Sobolev}, that $Df_j \to Df$ weakly in $W^{1,n}_\loc(\Omega, \R^{m\times n})$. Hence, by Lemmas \ref{lemma:lower_sc} and \ref{lemma:omega_convergence}, we have that
\begin{align*}
\int_B (\norm{\omega}\circ f) \norm{Df}^n &\le  \norm{\omega}_{L^\infty(G)} \int_B \norm{Df}^n \\
&\le \norm{\omega}_{L^\infty(G)} \liminf_{j \to \infty} \int_B \norm{Df_j}^n \\
&\le \frac{\norm{\omega}_{L^\infty(G)}}{\norm{\omega}_{L^\infty(G)}-\varepsilon} \liminf_{j \to \infty} \int_B (\norm{\omega}\circ f_j) \norm{Df_j}^n \\
&\le \frac{\norm{\omega}_{L^\infty(G)}}{\norm{\omega}_{L^\infty(G)}-\varepsilon} \liminf_{j \to \infty} \int_B K f_j^*\omega \\
&\le K \frac{\norm{\omega}_{L^\infty(G)}}{\norm{\omega}_{L^\infty(G)}-\varepsilon} \liminf_{j \to \infty} \int_\Omega \varphi f_j^*\omega\\
&= K \frac{\norm{\omega}_{L^\infty(G)}}{\norm{\omega}_{L^\infty(G)}-\varepsilon} \int_\Omega \varphi f^*\omega.
\end{align*}
Since $\varepsilon>0$ and $\varphi$ are arbitrary, we obtain the inequality
\[
\int_B (\norm{\omega}\circ f) \norm{Df}^n \le K \int_B f^*\omega.
\]
The claim now follows from Lebesgue's differentiation theorem.
\end{proof}

%%%%%%%%%%%%%%%%%%%%%%%%%%%%%%%%%%%%%%%%%%%%%%%%%%%%%%%%%%%%%%%%%%%%%%%%%%%%%%%
%%%%%%%%%%%%%%%%%%%%%%%%%%%%%%%%%%%%%%%%%%%%%%%%%%%%%%%%%%%%%%%%%%%%%%%%%%%%%%%
%%%%%%%%%%%%%%%%%%%%%%%%%%%%%%%%%%%%%%%%%%%%%%%%%%%%%%%%%%%%%%%%%%%%%%%%%%%%%%%
%%%%%%%%%%%%%%%%%%%%%%%%%%%%%%%%%%%%%%%%%%%%%%%%%%%%%%%%%%%%%%%%%%%%%%%%%%%%%%%

\section{Quasiregular curves and simple volume forms}
\label{sec:simple-forms}

In this section we consider quasiregular $\omega$-curves $M\to N$ for simple $n$-volume forms $\omega$. Recall that an $n$-form $\omega$ simple if there exists $1$-forms $\omega_1,\ldots, \omega_n$ for which $\omega = \omega_1\wedge \cdots \wedge \omega_n$. The main theorem is that such quasiregular curves are locally graphs over quasiregular maps in the following sense. 

\begin{named}{Theorem \ref{thm:graph-intro}}
Let $f\colon \Omega\to \R^m$ be a $K$-quasiregular $\omega$-curve, where $\Omega$ is a domain in $\R^n$, $n\le m$, $\varepsilon>0$, and $K'>K$. Then, for each $x\in \Omega$, there exists a neighborhood $D\Subset M$ of $x$, an isometry $L \colon \R^m \to \R^m$, a $K'$-quasiregular map $\hat f \colon D\to \R^n$, and a continuous Sobolev map $h\in W^{1,n}(D,\R^{m-n})$ for which $F = L \circ f|_D = (\hat f, h) \colon D \to \R^n \times \R^{m-n}$ and  
\[
(\star f^*\omega)/((1+\varepsilon)K') \le \norm{\omega_{f(x)}} J_{\hat f} \le (1+\varepsilon)K (\star f^*\omega)
\]
almost everywhere in $D$.
\end{named}

We begin by recalling a geometric observation. Since the proof is elementary multilinear algebra, we omit the details.

\begin{lemma}
\label{lemma:rotation}
Let $\omega \in \Omega^n(\R^m)$ be an simple $n$-volume form and $p\in N$. Then there exists an affine isometry $L \colon \R^m \to \R^m$ for which $L(p) = 0$ and $(L^{-1})^*\omega = \norm{\omega}_p dx_1\wedge \cdots \wedge dx_n$ at $0$. 
\end{lemma}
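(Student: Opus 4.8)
The plan is to reduce the statement to a pointwise fact about the single simple $n$-covector $\omega_p \in \wedge^n(\R^m)^*$ and then realize the required normalization by an orthogonal change of coordinates. Writing the desired isometry in the form $L(x) = A(x-p)$ with $A \in O(m)$, we immediately obtain $L(p)=0$; and since $A^{-1}=A^\top$, one computes $L^{-1}(y) = A^{-1}y + p$, so that the pullback $(L^{-1})^*\omega$ at $0$ equals $(A^{-1})^*\omega_p$. Thus the conclusion reduces to the linear identity $(A^{-1})^*\omega_p = \norm{\omega}_p\, e_1^*\wedge \cdots \wedge e_n^*$ on $\wedge^n(\R^m)^*$, where $e_1^*,\ldots,e_m^*$ is the standard dual basis, and it suffices to produce a single $A \in O(m)$ carrying $\omega_p$ into this standard normal form.

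To do this I would first use simplicity: at $p$ we may write $\omega_p = \alpha_1 \wedge \cdots \wedge \alpha_n$ for $1$-covectors $\alpha_1,\ldots,\alpha_n$, which are linearly independent because $\omega$ is non-vanishing. Applying Gram--Schmidt, with respect to the inner product on $(\R^m)^*$ induced by the Euclidean metric, to the subspace $W=\mathrm{span}(\alpha_1,\ldots,\alpha_n)$ produces an orthonormal basis $\beta_1,\ldots,\beta_n$ of $W$ and a scalar $c$ with $\omega_p = c\,\beta_1\wedge\cdots\wedge\beta_n$; after permuting and, if necessary, rescaling signs one arranges $c\ge 0$. Extending to an orthonormal basis $\beta_1,\ldots,\beta_m$ of $(\R^m)^*$, with dual orthonormal basis $b_1,\ldots,b_m$ of $\R^m$, I would take $A\in O(m)$ to be the orthogonal map determined by $Ab_i = e_i$ for all $i$. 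A direct computation using $A^{-1}=A^\top$ gives $(A^{-1})^*\beta_i = e_i^*$, hence $(A^{-1})^*\omega_p = c\, e_1^*\wedge\cdots\wedge e_n^*$.

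It then remains to identify the constant $c$ with the comass $\norm{\omega}_p$. Here I would evaluate $\beta_1\wedge\cdots\wedge\beta_n$ on an arbitrary tuple of unit vectors $v_1,\ldots,v_n$: this equals $\det(\langle b_i, v_j\rangle)$, which by Hadamard's inequality is bounded in absolute value by $\prod_j |v_j| \le 1$, with equality attained at $v_j = b_j$. Hence the orthonormal simple wedge $\beta_1\wedge\cdots\wedge\beta_n$ has comass $1$, so that $\norm{\omega}_p = c$. Substituting back yields $(A^{-1})^*\omega_p = \norm{\omega}_p\, e_1^*\wedge\cdots\wedge e_n^*$, which is the claim.

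The only genuinely substantive point is the comass computation of the last paragraph, namely that a simple orthonormal wedge has unit comass; everything else is bookkeeping about orthonormal frames and the pullback formula. I expect no real obstacle, since for simple covectors the comass coincides with the induced Euclidean (mass) norm, and the Hadamard bound makes this explicit, which is presumably why the author deems the argument elementary enough to omit.
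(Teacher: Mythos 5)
Your proof is correct, and it is exactly the ``elementary multilinear algebra'' the paper alludes to: the paper in fact omits the proof of this lemma entirely, so your argument -- reducing to the covector $\omega_p$, Gram--Schmidt normalization of the simple factors, extension to an orthonormal coframe, and the Hadamard-inequality computation showing an orthonormal simple wedge has comass $1$ -- supplies precisely the omitted details. No gaps: the sign adjustment $c\ge 0$, the orthogonality of $A$, and the identification $\norm{\omega}_p = c$ are all handled correctly.
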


As another preparatory step, we also record a simple lemma that each quasiregular curve is locally a quasiregular curve with respect to an $n$-volume form with constant coefficients. 
\begin{lemma}
\label{lemma:constant} 
Let $f\colon \Omega \to \R^m$ be a $K$-quasiregular $\omega$-curve, $x_0\in \Omega$, and $K'>K$. Then there exists a neighborhood $\Omega' \subset \Omega$ of $x_0$ for which the restriction $f|_{\Omega'} \colon \Omega' \to \R^m$ is a $K'$-quasiregular $\omega_0$-curve, where $\omega_0$ is a constant coefficient $n$-volume form satisfying $\omega_0(f(x_0)) = \omega(f(x_0))$. 
\end{lemma}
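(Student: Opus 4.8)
The plan is to take $\omega_0$ to be the unique constant-coefficient $n$-form on $\R^m$ whose value at the base point $p_0 := f(x_0)$ equals $\omega(p_0)$. Since $\omega$ is non-vanishing we have $\omega(p_0)\neq 0$, so $\omega_0$ is non-vanishing, and having constant coefficients it is closed; hence $\omega_0$ is a genuine $n$-volume form with $\omega_0(f(x_0)) = \omega(f(x_0))$ as required. Moreover its comass $\norm{\omega_0}$ is the constant function equal to $a := \norm{\omega(p_0)}>0$. It then remains only to verify the distortion inequality $(\norm{\omega_0}\circ f)\norm{Df}^n \le K' (\star f^*\omega_0)$ on a sufficiently small neighborhood $\Omega'$ of $x_0$; note that this inequality automatically forces $\star f^*\omega_0 \ge 0$, so positivity of the ``Jacobian'' $\star f^*\omega_0$ comes for free, and membership $f|_{\Omega'}\in W^{1,n}_\loc$ together with continuity are inherited from $f$.

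First I would record the pointwise comparison between $f^*\omega$ and $f^*\omega_0$. At a point $x$ of differentiability, writing $e_1,\dots,e_n$ for the standard basis of $\R^n$, one has $(\star f^*\omega)(x) = \omega_{f(x)}(Df(x)e_1,\dots,Df(x)e_n)$ and $(\star f^*\omega_0)(x) = \omega(p_0)(Df(x)e_1,\dots,Df(x)e_n)$, the latter because $\omega_0$ has constant coefficients. Subtracting and using the defining property of the comass norm, namely $|\alpha(w_1,\dots,w_n)| \le \norm{\alpha}\prod_i |w_i|$, together with $|Df(x)e_i|\le \norm{Df(x)}$, yields the key estimate
\[
\left| (\star f^*\omega)(x) - (\star f^*\omega_0)(x) \right| \le \norm{\omega_{f(x)} - \omega(p_0)}\,\norm{Df(x)}^n
\]
almost everywhere.

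Next, using continuity of $\omega$ and of $f$, for any $\delta>0$ I would choose a ball $\Omega'\ni x_0$ small enough that $\norm{\omega_{f(x)} - \omega(p_0)} < \delta$ for all $x\in \Omega'$; by the reverse triangle inequality for the comass norm this also gives $\norm{\omega_{f(x)}} \ge a-\delta$ on $\Omega'$. Abbreviating $t = \norm{Df(x)}^n$, the $K$-distortion inequality \eqref{eq:QRC} for $\omega$ gives $(a-\delta)t \le \norm{\omega_{f(x)}}\,t \le K\,(\star f^*\omega)(x)$, while the pointwise comparison gives $(\star f^*\omega_0)(x) \ge (\star f^*\omega)(x) - \delta t$. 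Combining these,
\[
(\star f^*\omega_0)(x) \ge \frac{a-\delta}{K}\,t - \delta t = \left(\frac{a-\delta}{K} - \delta\right) t ,
\]
so for $t>0$ the desired inequality $a\,t \le K'\,(\star f^*\omega_0)(x)$ holds as soon as $\tfrac{a-\delta}{K} - \delta \ge \tfrac{a}{K'}$ (for $t=0$ both sides vanish). Since $K'>K$ we have $a/K > a/K'$, and the left-hand side tends to $a/K$ as $\delta\to 0$, so the condition is satisfied for every sufficiently small $\delta$ (also small enough that $a-\delta>0$). Fixing such a $\delta$ and the corresponding $\Omega'$ completes the argument.

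I do not expect a genuine obstacle here: the argument is an elementary perturbation estimate built on the continuity of $\omega$ and $f$ and the submultiplicativity of the comass norm. The only point requiring care is keeping the constants straight in the final inequality and ensuring $a-\delta$ stays positive, which is precisely what confines the estimate to a neighborhood on which $f$ does not move $\omega$ far from its value $\omega(p_0)$.
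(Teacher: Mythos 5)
Your proposal is correct and takes essentially the same approach as the paper: both fix the constant-coefficient form $\omega_0$ agreeing with $\omega$ at $f(x_0)$, use continuity of $\omega$ and $f$ to make $\norm{\omega - \omega_0}\circ f$ small on a neighborhood, bound the error term $\star f^*(\omega-\omega_0)$ by $(\norm{\omega-\omega_0}\circ f)\norm{Df}^n$ via the comass inequality, and combine this with the $K$-distortion inequality for $\omega$ to absorb the error and obtain the $K'$-inequality for $\omega_0$. The only difference is bookkeeping of constants: the paper measures the perturbation relatively as $\norm{\omega_0}/c$ with $c\ge 2K$ chosen so that $c/(c-1-K)\le K'/K$, whereas you use an absolute $\delta$ and the condition $(a-\delta)/K - \delta \ge a/K'$, which are equivalent formulations of the same estimate.
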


\begin{proof}
Since $K'> K$, we may fix $c\ge 2K$ for which 
\[
\frac{c}{c-1-K} \le \frac{K'}{K}.
\]
Also, since $\omega$ is smooth and non-vanishing, we may fix a radius $\rho>0$ for which $\norm{\omega(y)-\omega_0} \le \norm{\omega_0}/c$ for all $y\in B^m(\rho)$. 

Let now $\Omega'$ be the $x_0$ component of $f^{-1}B^m(\rho)$. Then, almost everywhere in $\Omega'$, we have
\begin{align*}
\norm{\omega_0} \norm{Df}^n 
&\le \frac{c}{c-1} (\norm{\omega}\circ f) \norm{Df}^n \le  \frac{c}{c-1}K (\star f^*\omega) \\
&=  \frac{c}{c-1}K (\star f^*\omega_0) + \frac{c}{c-1}K (\star f^*(\omega-\omega_0)) \\
&\le \frac{c}{c-1}K (\star f^*\omega_0) + \frac{c}{c-1}K (\norm{\omega-\omega_0}\circ f)\norm{Df}^n \\
&\le \frac{c}{c-1}K (\star f^*\omega_0) + \frac{K}{c-1} \norm{\omega_0} \norm{Df}^n
\end{align*}
Thus
\begin{align*}
\norm{\omega_0}\norm{Df}^n 
&\le \frac{c}{c-1-K}K( f^*\omega_0) 
\le K'(f^*\omega_0)
\end{align*}
almost everywhere in $\Omega'$. The claim follows.
\end{proof}

\begin{proof}[Proof of Theorem \ref{thm:graph-intro}]
Let $x\in \Omega$. We may assume that $f(x)=0$ and that $\norm{\omega}_{f(x)} = 1$. By Lemma \ref{lemma:rotation}, there exists an isometry $L\colon \R^m \to \R^m$ for which $(L^{-1})^*\omega = dx_1\wedge \cdots \wedge dx_n$ at $0$. Then $F=(L \circ f) \colon \Omega \to \R^m$ is a $K$-quasiregular $\sigma$-curve for $\tau = (L^{-1})^*\omega$. Indeed, since $L$ is an isometry, we have that 
\[
(\norm{\sigma}\circ F) \norm{DF}^n 
= (\norm{\omega}\circ f) \norm{Df}^n \le K (\star f^*\omega) 
= K (\star F^*\sigma).
\]

By Lemma \ref{lemma:constant}, we may now fix a neighborhood $D\Subset \Omega$  of $x$ for which $F|_D \colon D\to \R^m$ is a $K'$-quasiregular $\sigma_0$-curve, where $\sigma_0$ is the constant coefficient $n$-volume form satisfying $\sigma_0(F(x)) = \sigma(F(x))$. Since $\norm{\sigma_0} = 1$, we may further assume that $1/(1+\varepsilon) \le \norm{\sigma} \le 1+\varepsilon$ on $D$.

Let $\pi \colon \R^m \to \R^n$ be the projection $(y_1,\ldots, y_m) \to (y_1,\ldots, y_n)$. Since $\hat f = \pi \circ F$, we have that
\[
\norm{D\hat f}^n = \norm{D(\pi \circ F)}^n \le \norm{DF}^n \le K' \left( \star F^*\sigma_0\right) = K' \left( \star \hat f^* \vol_{\R^n}\right)
\]
almost everywhere in $D$. Thus $\hat f$ is $K'$-quasiregular.

Since $h = \pi' \circ F \colon D \to \R^{m-n}$, where $\pi' \colon \R^m \to \R^{m-n}$ is the projection $(x_1,\ldots, x_m) \mapsto (x_{n+1},\ldots, x_m)$, we readily observe that $h\in W^{1,n}(D,\R^{m-n})$ as required.

It remains to prove that the Jacobian estimates. On one hand, we have
\begin{align*}
J_{\hat f} &= \star \hat f^*\vol_{\R^n} = \star F^*\pi^*\vol_{\R^n} = \star F^*\sigma_0\\
&\le \norm{\sigma_0} \norm{DF}^n \le (1+\varepsilon)(\norm{\omega} \circ f) \norm{Df}^n \le (1+\varepsilon)K (\star f^*\omega). 
\end{align*} 
on $D$. 
On the other hand, we have
\begin{align*}
\star f^*\omega &= \star F^*\sigma \le (\norm{\sigma}\circ F)\norm{DF}^n \le (1+\varepsilon) \norm{\sigma_0} \norm{DF}^n \\
&\le (1+\varepsilon)K' (\star F^*\sigma_0) = (1+\varepsilon)K' J_{\hat f}. 
\end{align*}
This completes the proof.
\end{proof}

The corollaries stated in the introduction are now almost immediate.

\begin{named}{Corollary \ref{cor:discreteness-intro}}
Let $f\colon M\to N$ be a non-constant quasiregular $\omega$-curve, where $\omega$ is a simple $n$-volume form. Then $f$ is discrete and locally quasi-interior at each point. 
\end{named}

\begin{proof}
Since the properties are local, it suffices to consider the case $f\colon \Omega \to \R^m$, where $\Omega \subset \R^n$ is a domain. Let $x\in \Omega$. 

By Theorem \ref{thm:graph-intro}, there exists an isometry $L\colon \R^m \to \R^m$ and a neighborhood $D\Subset \Omega$ of $x$ for which the map $F= L \circ f|_D = (\hat f,h) \colon D \to \R^n\times \R^{m-n}$ has the property that $\hat f$ is a non-constant quasiregular mapping and $h$ is a continuous Sobolev map in $W^{1,n}(D,\R^{m-n})$.

Let now $y \in \R^m$. Then $f^{-1}(y) \cap D \subset \hat f^{-1}(L(y))$. Since $\hat f$ is discrete, we conclude that $f^{-1}(y) \cap D$ is a discrete set in $D$. Thus $x$ has a neighborhood which contains only finitely many pre-images $f^{-1}(y)$ of $y$. Thus $f^{-1}(y)$ is a discrete set in $\Omega$.

To show that $f$ is locally quasi-interior, let $x\in \Omega$. Since $\hat f$ is discrete and open, we may fix a normal neighborhood $G \Subset D$ for $\hat f$ at $x$, that is, a domain satisfying $\hat f(\partial G) = \partial \hat f G$ and $\hat f^{-1}\hat f(x) = \{x\}$; see e.g.~\cite[Lemma I.4.8]{Rickman-book}. Let now $U \subset G$ be a neighborhood of $x$ contained in $G$. Then there exists another normal neighborhood $G' \subset U$ for $\hat f$ at $x$. Since components of $(\hat f^{-1}\hat fG') \cap G$ map surjectively onto $\hat fG'$ under $\hat f$, we conclude that $\hat f^{-1}\hat fG' \cap G= G'$.

Let now $\pi \colon \R^m \to \R^n$ be the projection $(x_1,\ldots, x_m) \mapsto (x_1,\ldots, x_n)$. Since $\hat fG'$ is open in $\R^n$, we conclude that $FG \cap \pi^{-1}\hat fG'$ is open in $FG$. Since $FG \cap (\pi^{-1}\hat fG') = FG'$, we obtain that $FG'$ is open in $FG$. Thus $f(x)$ is in the interior of $FU$ in $FG$. The claim follows.
\end{proof}

\begin{remark}
In a similar vein, it is a direct consequence of Theorem \ref{thm:graph-intro} that the \emph{singular set}
\[
\Sigma_f = \{ x\in M \colon f \text{ is not locally injective at } x\}
\]
of the quasiregular $\omega$-curve $f \colon M\to N$ has codimension at least $2$ if $\omega$ is simple. Indeed, since
\begin{align*}
\Sigma_{f|_D} &= \{ x\in D \colon f|_D \text{ is not locally injective at } x\} \\
&\subset  \{ x\in D \colon \hat f \text{ is not a local homeomorphism at } x\} 
= B_{\hat f},
\end{align*}
we have by the Chernavskii--V\"ais\"al\"a theorem for discrete and open maps (see \cite{Vaisala}) that $\dim \Sigma_{f|_D} \le \dim B_{\hat f} \le \dim M - 2$. 
\end{remark}

\begin{named}{Corollary \ref{cor:analytic-intro}}
Let $f\colon M \to N$ be a non-constant quasiregular $\omega$-curve for a simple volume form $\omega$ in $N$. Then
\begin{enumerate}
\item (positivity of the Jacobian) $\star f^*\omega >0$ almost everywhere in $M$, 
\item (higher integrability) there exists $p=p(n,K)>0$ for which $f\in W^{1,p}_\loc(M,N)$, and
\item (differentiability) $f$ is differentiable almost everywhere.
\end{enumerate}
\end{named}

\begin{proof}
Again, by passing to smooth $(1+\varepsilon)$-bilipschitz charts, we may assume that $f\colon \Omega \to N$ is a $K'$-quasiregular $\omega$-curve, where $\Omega \subset \R^n$ is a domain and $K'=K(1+\varepsilon)^{4n}$. Let again $L \colon \R^m \to \R^m$ be an isometry and $D\Subset \Omega$ be a domain for which $F=L\circ f = (\hat f,h) \colon D\to \R^n$, where $\hat f \colon D\to \R^m$ is a $K'$-quasiregular map and $h \colon D\to \R^m$ a continuous Sobolev map in $W^{1,n}(D,\R^{m-n})$. We may further assume that $\star F^*\sigma \le 2K' J_{\hat f}$ in $D$, where $\sigma = (L^{-1})^*\omega$.

Since $J_{\hat f}>0$ almost everywhere in $D$ by \cite[Theorem II.7.4]{Rickman-book}, we have that $\star f^*\omega >0$ almost everywhere in $D$. The first claim follows.

For the second claim, it suffices to observe that higher integrability holds for quasiregular mappings, that is, by Bojarski--Iwaniec \cite[Theorem 5.1]{Bojarski-Iwaniec}, there exists $p'=p'(n,K')>0$ for which $\hat f\in W^{1,p'}(D,\R^n)$. It remains to show that $h \in W^{1,p'}(D,\R^{m-n})$. 

Since $D \Subset \Omega$, we have that $\inf_D (\norm{\sigma}\circ F) >0$. Thus the estimate
\begin{align*}
(\norm{\sigma}\circ F) \norm{Dh}^n &\le (\norm{\sigma}\circ F) \norm{DF}^n \le K (\star F^*\sigma) \\
&\le 2KK' \norm{\omega_{f(x)}}J_{\hat f} \le 2KK' \norm{\omega_{f(x)}} \norm{D\hat f}^n 
\end{align*}
yields a bound $\norm{Dh}\le C \norm{D\hat f}$ in $D$, where $C$ depends only on $n$ and $K$. Hence $\norm{Dh}\in L^{p'}(D)$ and $f\in W^{1,p'}(D,\R^{m-n})$. The second claim follows.

Since Sobolev functions in $W^{1,p'}(D)$ for $p'>n$ are differentiable almost everywhere by the Cesari--Calder\'on lemma (see e.g.~\cite[Lemma VI.4.1]{Rickman-book}), the third claim follows. This completes the proof.
\end{proof}

We finish with Reshetnyak's theorem for $C^1$-smooth quasiregular curves.
\begin{named}{Corollary \ref{cor:smooth-Reshetnyak}}
A non-constant $C^1$-smooth quasiregular $\omega$-curve $f\colon M \to N$ is a discrete map satisfying $\star f^*\omega>0$ almost everywhere in $M$.
\end{named}

\begin{proof}
It suffices to consider the case of a $C^1$-smooth quasiregular $\omega$-curve $f\colon \Omega\to \R^m$. Then, by the discussion in the introduction, $f$ is locally a quasiregular curve with respect to a simple $n$-volume form. Thus, by Corollary \ref{cor:discreteness-intro}, $f$ is discrete. 

For the second claim, consider a domain $D \Subset \Omega$ having the property that $f|_D\colon D\to \R^m$ is a quasiregular curve with respect to a simple $n$-volume form $\omega_D$ satisfying $\norm{\omega_D} \le \norm{\omega}$ in $D$; the discussion in the introduction shows that such domains $D$ exist. Then, by (1) in Corollary \ref{cor:analytic-intro},
\[
K( \star f^*\omega) \ge (\norm{\omega}\circ f)\norm{Df}^n \ge (\norm{\omega_D}\circ f)\norm{Df}^n \ge
\star f^* \omega_D >0
\]
in $D$. The second claim follows.
\end{proof}

%%%%%%%%%%%%%%%%%%%%%%%%%%%%%%%%%%%%%%%%%%%%%%%%%%%%%%%%%%%%%%%%%%%%%%%%%%%%%%%
%%%%%%%%%%%%%%%%%%%%%%%%%%%%%%%%%%%%%%%%%%%%%%%%%%%%%%%%%%%%%%%%%%%%%%%%%%%%%%%
%%%%%%%%%%%%%%%%%%%%%%%%%%%%%%%%%%%%%%%%%%%%%%%%%%%%%%%%%%%%%%%%%%%%%%%%%%%%%%%

%\bibliographystyle{abbrv}
%\bibliography{QRC}

\end{document}